\newtheorem{theorem}{Theorem}[section]
\newtheorem{lemma}{Lemma}[section]
\theoremstyle{definition}
\theoremstyle{remark}
\newtheorem{remark}{Remark}[section]
\numberwithin{equation}{section}
\DeclareMathOperator*{\argmax}{arg\,max}
\DeclareMathOperator{\curl}{curl}
\DeclareMathOperator{\Div}{div}
\DeclareMathOperator{\grad}{grad}
\DeclareMathOperator{\loc}{loc}
\title[Gesture-Computing Technique by EM waves]{On a gesture-computing technique using electromagnetic waves}
\author{Jingzhi Li}
\address{Department of Mathematics, Southern University of Science and
Technology, Shenzhen, P.~R.~China}
\email{li.jz@sustc.edu.cn}
\author{Hongyu Liu}
\address{Department of Mathematics, Hong Kong Baptist University, Kowloon Tong, Hong Kong SAR.\vspace*{-4mm}}
\address{\vspace*{-4mm}and}
\address{HKBU Institute of Research and Continuing Education, Virtual University Park, Shenzhen, P. R. China.}
\email{hongyu.liuip@gmail.com; hongyuliu@hkbu.edu.hk}
\author{Hongpeng Sun}
\address{Institute for Mathematical Sciences, Renmin University of China, Beijing, P. R. China.}
\email{hpsun@amss.ac.cn}
\begin{document}
\maketitle

\begin{abstract}

This paper is concerned with a conceptual gesture-based instruction/input technique using electromagnetic wave detection.
The gestures are modelled as the shapes of some impenetrable or penetrable scatterers from a certain admissible class, called a {\it dictionary}. The gesture-computing device generates time-harmonic electromagnetic point signals for the gesture recognition and detection. It then collects the scattered wave in a relatively small backscattering aperture on a bounded surface containing the point sources. The recognition algorithm consists of two stages and requires only two incident waves of different wavenumbers. The location of the scatterer is first determined approximately by using the measured data at a small wavenumber and the shape of the scatterer is then identified using the computed location of the scatterer and the measured data at a regular wavenumber. We provide the corresponding mathematical principle with rigorous analysis. Numerical experiments show that the proposed device works effectively and efficiently.

\medskip

\medskip

\noindent{\bf Keywords:}~~Gesture recognition, instruction/input device, electromagnetic wave propagation, inverse scattering

\noindent{\bf 2010 Mathematics Subject Classification:}~~35R30, 35P25, 78A46

\end{abstract}

\section{Introduction}

The modern technology of gesture computing and computer vision enables human beings to communicate with the machine and robots and interact easily without any mechanical devices, which can be seen as an important AI (artificial intelligence) technology; see e.g. \cite{EBNT, PST}. A gesture-based computing technology usually contains three major ingredients: the computing machine, the recognition device and the human being who gives instructions to the computer. The recognition device mainly receives and ``sees" the human body language, which are mainly hand or body gestures, and then interpret them as specific orders for the computing machine. It connects the computing machine and the human being as a bridge. By tracking the hand or body gesture, it could be possible to operate important machines easily and safely for real applications.

From our earlier discussion on the mechanism of the gesture computing technology, it is easily seen that the recognition device and the recognition method play key roles for a successful application of this technology. Nowadays, one usually utilizes cameras to capture images or videos of a human being's movements and then use computer vision and image processing techniques to recognize the gestures; see e.g. \cite{KT, MKA}, where complicated models are suggested and iterative optimization method are employed. In \cite{LWY}, the authors developed a novel gesture recognition technique via the use of acoustic waves and inverse scattering methods. To motivate the current study, we briefly discuss the function process of the new design in \cite{LWY}, and refer to \cite{LWY} for more relevant details. First, the device consists of a transmitter and an array of receivers distributed on a bounded surface containing the transmitter. The transmitter can generate time-harmonic point wave signals, and the receivers can collect wave signals around the space. The performance of the gesture shall perturb the wave propagation, leading to the so-called scattering, and the receivers then collect the scattering wave data within a relatively small backscattering aperture. The recognition process is divided into two steps. First, the transmitter generates a low-frequency wave signal and one then uses the collected scattering data to determine the location of the human being who is performing the gesture. Second, the transmitter generates a regular-frequency (compared to the size of the human being) wave signal, and one then uses the collected scattering data to determine the gesture. There are several salient features of the newly proposed technique, and in particular it does not require any lighting condition as the conventional ones of using cameras. Moreover, the computation method for the recognition process is totally ``direct" without any inversion or iteration, and hence it is very fast and robust. There is actually some engineering development Google in Project Soli of using radar waves to identify hand gestures (cf. \cite{GP}), which is stated as a new sensing technology that uses miniature radar to detect touchless gesture interactions.

In this article, we aim to further develop the gesture-computing technique in \cite{LWY} to a more practical setting of using electromagnetic (EM) waves. The propagation of EM signals are much faster than the acoustic signals. Hence, they can provide a much more timely recognitions. Moreover, the EM signals are also more sensible than the acoustic signals \cite{BW}, and they can produce more accurate gesture recognitions. Following a similar spirit to \cite{LWY}, the mathematical setup in the current study is reduced to an inverse EM scattering problem, where by emitting EM waves and collecting the corresponding scattered waves, one intends to identify the unknown scatterer (corresponding to the gestures). We would like to mention that the inverse EM scattering problems have wide applications in radar/sonar, geophysical exploration, medical imaging and remote sensing, to name a few; see e.g. \cite{BW, CK, KG, ND, RP} and the references therein. There are a few new challenges that one shall face in the design of the gesture-computing device by using the inverse EM scattering method. First, the measurement information is very limited and indeed in our design, one only has the backscattering data in a small aperture associated with two time-harmonic point signals. Second, the recognition should be conducted in a timely manner. There is a key ingredient in our study that is critical for us to overcome those challenges. It is assumed that the gestures are all from a {\it dictionary} that is known a priori. This is a reasonable and practical assumption since the admissible gestures can be captured and stored by the device in advance. A few dictionary techniques have been proposed and investigated in the literature for inverse EM scattering problems; see \cite{AA1, AA2, AA3, LLZ} and the references therein. The key ingredients of a dictionary method are the design of the appropriate dictionary class and the dictionary searching method. These are also the major technical contributions of the current article. It is noted that in the practical scenario, the human being who performs the gestures will not stand in a fixed position. Hence, one would need first to determine the location of the scatterer, namely the gesture. After that, one can use the dictionary matching algorithm to determine the specific gesture. However, in the dictionary class, the scattering information of the admissible gestures should be independent of any location requirement. This challenge can be solved by using the so-called translation relation if incident plane waves are used; see \cite{AA1, AA2, AA3, LLZ}. But in the current design, point signals are used and the scattering data are collected in a special manner. This requires some technical treatments in our study. Moreover, for timely dictionary matching, we propose a fast and robust ``direct" method based on our theoretical analysis.

The rest of the paper is organized as follows. In section \ref{sec:mathground}, we discuss the mathematical principle for the gesture computing with electromagnetic waves. In section \ref{sec:recon:algo}, we present a two-stage recognition algorithm based on the theoretical analysis. In section \ref{sec:num}, extensive numerical tests are conducted to verify the effectiveness and efficiency of the proposed algorithm. The paper is concluded with some relevant discussion in Section~\ref{sect:conclusion}.

\section{Mathematical framework}\label{sec:mathground}

In this section, we present the mathematical setting and fundamentals for the proposed gesture-computing technique.
The body shape of the person who performs gestures is modelled as a $C^2$ domain $\Omega$. $\Omega$ is assumed to have a connect complement $\Omega^c := \mathbb{R}^{3} \backslash \bar \Omega$. It is assumed that there exists a {\it dictionary} of $C^2$ domains, which could be calibrated beforehand as discussed earlier, i.e.,
\begin{equation}\label{eq:dic:class}
\mathfrak{D} = \{ D_j\}_{j=1}^{N}, \quad N \in \mathbb{N},
\end{equation}
where each $D_j$ is simply connected and contains the origin, such that
\begin{equation}\label{eq:dic:trans}
\Omega = D + z: = \{ x+z; \ x \in D \}, \quad D  \in \mathfrak{D}, \quad z \in \mathbb{R}^3.
\end{equation}
Our gesture computing strategy with EM waves could be formulated as an inverse EM scattering problem. Generally speaking, an inverse scattering problem is concerned with the recovery of an unknown scatterer by EM wave probing. To that end, one sends an incident EM field to probe the scatterer, and then measure the scattered EM wave data away from the scatterer. By using the measurement data, one can infer knowledge about the unknown scatterer. In the gesture-computing setup of the current study, the gesture $\Omega$ or the dictionary domain $D_j$ shall be modelled as a non-penetrable perfectly conducting scatterer or a penetrable medium scatterer, which already covered lots of important applications \cite{CK}. The inputs of the gesture computing are modelled as certain incident EM point waves located at a fixed spot. With the incident waves, one then measures the scattered wave due to the unknown gesture (scatterer) $\Omega$ on a measurement surface $\Gamma$ with multiple receivers. In our study, the measurement surface $\Gamma$ contains the location of the incident point waves.

In what follows, we shall need the following two assumptions,
\begin{equation}\label{eq:assume:1}
\|D_j\|: = \max_{x \in D} |x| \simeq 1, \quad 1 \leq j \leq N,
\end{equation}
together with
\begin{equation}\label{eq:assume:2}
|z| \gg 1,
\end{equation}
where $z$ is the location of $\Omega$ as in \eqref{eq:dic:trans}. Assumption \eqref{eq:assume:1} means that the size of the scatterer $\Omega$ can be calibrated such that the low frequency of the EM waves is characterized as $\frac{2 \pi}{k} \gg \|\Omega\|$ and the regular frequency scale is characterized as $\frac{2 \pi}{k} \simeq \|\Omega\|$, where $k\in\mathbb{R}_+$ signifies the wave number of the EM waves. This is practically feasible, since the frequency band and the spectrum of the  electromagnetic waves are of a very wide range \cite{BW}. Assumption \eqref{eq:assume:2} signifies that the person performing gesture instructions should stay away from the recognition device with a sufficiently large distance. Actually, we would like to point out that this condition is needed mainly for theoretical justification of the proposed gesture recognition algorithm in what follows. Indeed, in our numerical tests,
it can be seen that as long as the scatterer $\Omega$ is located away from the point sources of a reasonable distance, then the recognition algorithm works effectively and efficiently.

Now we turn to the input of our gesture computing, namely, the point incident waves for detecting the unknown gesture $\Omega$ and the plane incident waves for $D_j$ of $\mathfrak{D}$. Throughout this section, the following electric dipole with a polarization vector $p\in\mathbb{R}^3$ is chosen as an incident wave for $\Omega$ \cite{Kir1},
\begin{equation}\label{eq:point}
E_{k,ed}^{i}(x,y) = \frac{i}{k}\curl_{x} \curl_{x} [p \Phi_{k}(x,y)],\quad H_{k,ed}^{i}(x,y) = \curl_{x} [p \Phi_{k}(x,y)],
\end{equation}
where $\Phi_{k}(x,y)$ is the fundamental solution of the Helmholtz equation of wave number $k\in\mathbb{R}_+$ in $\mathbb{R}^3$ with source placed at $y$ (cf.~\cite{CK}), and the $\curl_x$ denotes the curl operator acting on $x$, i.e.,
\begin{equation}
\Phi_{k}(x,y) = \frac{e^{ik|x-y|}}{4 \pi |x-y|}, \quad x, y \in \mathbb{R}^3, \ \ x \neq y.
\end{equation}
In addition, the following incident plane wave with the polarization $p\in\mathbb{R}^3$, direction of propagation $d\in\mathbb{S}^2$ and wave number $k\in\mathbb{R}_+$ \cite{CK, ND} shall be chosen for $D_j$ of $\mathfrak{D}$,
\begin{equation}\label{eq:plane}
E_{k}^{i}(d,p;x) = ik(d\times p) \times d e^{ikx\cdot d}, \quad H_{k}^{i}(d,p;x) = ikd \times p e^{ikx \cdot d}.
\end{equation}
The following auxiliary lemma on the asymptotic relations between the electric dipole incident wave \eqref{eq:point} and the EM plane wave \eqref{eq:plane} is useful for our subsequent discussion. Henceforth, we denote $\hat{z} = z/|z|$ for $z\in\mathbb{R}^3\backslash\{0\}$.
\begin{lemma}\label{lem:incident}
The electric dipole incident wave \eqref{eq:point} and electromagnetic plane incident waves \eqref{eq:plane} have the following asymptotic transition relations,
\begin{align}
&\lim_{|z| \rightarrow \infty} E_{k,ed}^{i}(x+z,y) = \frac{e^{ik|z|-ik\hat{z} \cdot y}}{4 \pi |z|}E_{k}^{i}(\hat{z},p;x), \\
& \lim_{|z| \rightarrow \infty} H_{k,ed}^{i}(x+z,y) = \frac{e^{ik|z|-ik\hat{z} \cdot y}}{4 \pi |z|} H_{k}^{i}(\hat{z},p;x).
\end{align}
\end{lemma}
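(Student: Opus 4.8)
The plan is to reduce everything to the far-field asymptotics of the scalar fundamental solution $\Phi_k$ and to the commutation of the $\curl_x$ operators with the limit $|z|\to\infty$. First I would observe that, since $z$ is a constant shift of the spatial variable, differentiating in $x$ a function evaluated at $x+z$ coincides with differentiating at the shifted point; hence $E_{k,ed}^{i}(x+z,y)=\frac{i}{k}\curl_x\curl_x[p\,\Phi_k(x+z,y)]$ and $H_{k,ed}^{i}(x+z,y)=\curl_x[p\,\Phi_k(x+z,y)]$, where the curls now act genuinely on $x$. This moves the whole problem onto the scalar kernel $\Phi_k(x+z,y)$, for which the key expansion is $r:=|x+z-y|=|z|+\hat z\cdot(x-y)+O(|z|^{-1})$, uniformly for $x,y$ in a fixed compact set. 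Inserting this into $\Phi_k$ gives
\begin{equation*}
\Phi_k(x+z,y)=\frac{e^{ik|z|-ik\hat z\cdot y}}{4\pi|z|}\,e^{ik\hat z\cdot x}\bigl(1+O(|z|^{-1})\bigr),
\end{equation*}
so that, up to the scalar prefactor $\frac{e^{ik|z|-ik\hat z\cdot y}}{4\pi|z|}$, the kernel behaves like the plane-wave factor $e^{ik\hat z\cdot x}$.

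The \emph{main obstacle} is that $\curl_x$ differentiates in $x$, so I must show the above expansion survives differentiation, i.e.\ that it holds in the $C^2$-topology in $x$ with remainder still $O(|z|^{-1})$. Differentiating $\Phi_k(x+z,y)=e^{ikr}/(4\pi r)$ produces the factor $(ik-1/r)\hat n$ with $\hat n=(x+z-y)/r$; as $|z|\to\infty$ one has $\hat n=\hat z+O(|z|^{-1})$ and $1/r=O(|z|^{-1})$, so after removing the prefactor $\partial_{x_j}\Phi_k(x+z,y)$ converges to $\partial_{x_j}e^{ik\hat z\cdot x}$. Iterating, the second curl generates the usual near-field $1/r$, $1/r^2$, $1/r^3$ dipole contributions together with derivatives of $\hat n$, all of which are $O(|z|^{-1})$ and drop out in the limit, leaving only the radiation term. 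Tracking these remainders and verifying their uniformity on compact sets in $(x,y)$ is the only genuinely technical part.

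Once the limit is passed through the derivatives, it remains to apply the curls to the leading plane-wave factor. Using the constant-vector identities $\curl_x[p\,e^{ikd\cdot x}]=ik(d\times p)e^{ikd\cdot x}$ and $\curl_x\curl_x[p\,e^{ikd\cdot x}]=-k^2\,d\times(d\times p)\,e^{ikd\cdot x}=k^2\,(d\times p)\times d\,e^{ikd\cdot x}$ with $d=\hat z$, I obtain
\begin{equation*}
\curl_x[p\,e^{ik\hat z\cdot x}]=ik\,\hat z\times p\,e^{ik\hat z\cdot x}=H_k^{i}(\hat z,p;x),\qquad \tfrac{i}{k}\curl_x\curl_x[p\,e^{ik\hat z\cdot x}]=ik\,(\hat z\times p)\times\hat z\,e^{ik\hat z\cdot x}=E_k^{i}(\hat z,p;x).
\end{equation*}
Multiplying by the scalar prefactor $\frac{e^{ik|z|-ik\hat z\cdot y}}{4\pi|z|}$ then yields exactly the two claimed transition relations, which completes the argument.
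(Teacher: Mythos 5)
Your proposal is correct and takes essentially the same route as the paper's proof: both reduce the claim to far-field asymptotics of $\Phi_k(x+z,y)$ and its first and second derivatives, note that the near-field $1/r$-type terms and the deviation $\hat n-\hat z$ are $O(|z|^{-1})$ uniformly, and then identify the limits with $E_k^{i}(\hat z,p;x)$ and $H_k^{i}(\hat z,p;x)$ through the identities $p-\hat z(\hat z\cdot p)=(\hat z\times p)\times\hat z$ and $\nabla e^{ik\hat z\cdot x}\times p=ik\,\hat z\times p\,e^{ik\hat z\cdot x}$. The only (cosmetic) difference is ordering: the paper writes out the exact dipole fields first (invoking Lemma 3.29 of Kirsch--Hettlich) and then expands, whereas you establish the $C^2$-convergence of the scalar kernel first and then apply the curls to the plane-wave limit.
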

\begin{proof}
By direct calculations, together with the help of Lemma 3.29 and its proof in \cite{Kir1}, we have
\begin{align*}
E_{k,ed}^{i}(x+z,y)  &= \frac{i}{k}\curl_{x} \curl_{x} [p\Phi_{k}(x+z,y)] = \frac{i}{k}(-\Delta_{x} + \nabla_{x} \Div_{x} )[p \Phi_{k}(x+z,y)] \\
&=ik \Phi_{k}(x+z,y) \left[p-\frac{z-(y-x)}{|z-(y-x)|}\frac{[z-(y-x)]\cdot p}{|z-(y-x)|}\right] + \mathcal{O}(|z|^{-2}) \\
& = ik \frac{e^{ik|z|}}{4 \pi |z|}  e^{ik \hat{z} \cdot (x-y)}[p - \hat{z} \hat{z} \cdot p] + \mathcal{O}(|z|^{-2})  \\
& = \frac{e^{ik|z|-ik\hat{z} \cdot y}}{4 \pi |z|}  ik(\hat{z}\times p) \times \hat{z} e^{ikx\cdot \hat{z}}  + \mathcal{O}(|z|^{-2}) \\
& = \frac{e^{ik|z|-ik\hat{z} \cdot y}}{4 \pi |z|} E_{k,p}^{i}(x,\hat{z},p)  + \mathcal{O}(|z|^{-2}).
\end{align*}
Similarly, we have
\begin{align*}
H_{k,ed}^{i}(x+z,y) & = \nabla_{x} \Phi_{k}(x+z,y) \times p \\
 &= \Phi_{k}(x+z,y)\left[ik - \frac{1}{|z-(y-x)|}\right]\frac{z-(y-x)}{|z-(y-x)|}\times p \\
&  =  \frac{e^{ik|z|-ik\hat{z} \cdot y}}{4 \pi |z|} ik e^{ikx\cdot \hat{z}}  \hat{z} \times p + \mathcal{O}(|z|^{-2}) \\
 &=   \frac{e^{ik|z|-ik\hat{z} \cdot y}}{4 \pi |z|}  H_{k,p}^{i}(x) +\mathcal{O}(|z|^{-2}).
\end{align*}

The proof is complete.
\end{proof}

Next, we consider the EM scattering associated with the gesture computing described above. We shall divide our study into two separate cases: the scatterer $\Omega$ is an impenetrable conducting scatterer and is a penetrable medium scatterer.

\subsection{Perfectly Electrically Conducting Scatterer}
We first consider the case that $\Omega$ is a perfectly electrically conducting (PEC) obstacle. The electromagnetic wave scattering from a PEC scatterer in the frequency domain is governed by the following Maxwell system \cite{PM}
\begin{equation}\label{eq:scat:pec}
\begin{cases}
\curl E_{k,\Omega}^{s} -ik H_{k,\Omega}^{s} = 0, \quad \curl H_{k,\Omega}^{s} +ik E_{k,\Omega}^{s} = 0, \quad x \in \Omega^c, \\
\nu \times (E_{k,\Omega}^{s} + E_{k,ed}^i) = 0, \quad x \in \partial \Omega, \\
\displaystyle{\lim_{|x| \rightarrow \infty}(H_{k,\Omega}^{s}\times x -  |x|E_{k,\Omega}^{s} ) = 0, \ \lim_{|x| \rightarrow \infty}(E_{k,\Omega}^{s}\times x +  |x|H_{k,\Omega}^{s} )}=0,
\end{cases}
\end{equation}
where $(E_{k,\Omega}^{s}, H_{k,\Omega}^{s}) \in H_{\loc}(\curl;\Omega^c)\times H_{\loc}(\curl;\Omega^c)$ and $\nu$ signifies the exterior unit normal vector of the domain concerned. Here the last equation is called the Silver-M\"{u}ller radiation condition which captures the decaying properties the radiating scattered electromagnetic waves, and could guarantee the uniqueness of the physical solution.

We also need to consider the scattering by a PEC scatterer $D$ from $\mathfrak{D}$ by the plane incident wave \eqref{eq:plane} with the Silver-M\"{u}ller radiation condition, i.e., to find $( E_{k}^{s}(D, d,p;x)$, $H_{k}^{s}(D,d,p;x) )$ $\in H_{\loc}(\curl;D^c)$ $\times H_{\loc}(\curl;D^c)$ with $D^c = \mathbb{R}^3\backslash \bar D$, such that
\begin{equation}\label{eq:scat:pec:plane}
\begin{cases}
\curl E_{k}^{s}(D, d,p;x) -ik H_{k}^{s}(D,d,p;x) = 0,\quad x \in D^c ,\\
 \curl H_{k}^{s}(D, d,p;x) +ik E_{k}^{s}(D,d,p;x) = 0,  \quad x \in D^c, \\
\nu \times (E_{k}^{s}(D,d,p;x) + E_{k}^i(d,p;x)) = 0, \quad x \in \partial D.\\
\end{cases}
\end{equation}

For the following discussions, we also need the corresponding far field pattern of $E_{k,\Omega}^s(x)$. The far field pattern is the asymptotic amplitude of the corresponding scattered electric field or magnetic field \cite{CK}. Take $E_{k,\Omega}^{s}(x) $ and $H_{k,\Omega}^{s}(x)$ for example,
\[
E_{k,\Omega}^{s}(x) = \frac{e^{ik|x|}}{|x|}  E_{k,\Omega}^{\infty}(\hat{x}) + \mathcal{O}(|x|^{-2}), \quad H_{k,\Omega}^{s}(x) = \frac{e^{ik|x|}}{|x|}  H_{k,\Omega}^{\infty}(\hat{x})+ \mathcal{O}(|x|^{-2}),
\]
where the far field $E_{k,\Omega}^{\infty}(\hat{x})$ and $H_{k,\Omega}^{\infty}(x)$ belong to $T^{2}(\mathbb{S}^2)$ with $T^{2}(\mathbb{S}^2)$ denoting the tangential vector space of unit sphere $\mathbb{S}^2$ in $\mathbb{R}^3$ \cite{PM}.

For the scattered electromagnetic waves of system \eqref{eq:scat:pec} and \eqref{eq:scat:pec:plane}, they have the following asymptotic relations as in the following theorem, while the displacement $|z|$ is large enough.
 \begin{theorem}\label{thm:pec}
 Let $k \in \mathbb{R}_{+}$ be fixed. We have the following asymptotic expansions for the PEC scattering problem \eqref{eq:scat:pec} under the translation relation \eqref{eq:dic:trans},
 \begin{align}
 E_{k,\Omega}^s(x) &=   \frac{e^{ik|z|-ik\hat{z} \cdot y}}{4 \pi |z|} \frac{e^{ik|x-z|}}{|x-z|}[E_{k}^{\infty}(D,\hat{z},p;\widehat{x-z})  +  \mathcal{O}(|z|^{-1})][1+\mathcal{O}(|z|^{-1})], \\
 H_{k,\Omega}^s(x) &=   \frac{e^{ik|z|-ik\hat{z} \cdot y}}{4 \pi |z|} \frac{e^{ik|x-z|}}{|x-z|}[H_{k}^{\infty}(D,\hat{z},p;\widehat{x-z})  + \mathcal{O}(|z|^{-1})][1+\mathcal{O}(|z|^{-1})],
 \end{align}
 for any fixed $x \in \Omega^c$ as $|z| \rightarrow \infty$ uniformly for all $\hat{z} \in \mathbb{S}^2$, where $E_{k}^{\infty}(D,\hat{z},p;\widehat{x-z})$ and $H_{k}^{\infty}(D,\hat{z},p;\widehat{x-z})$ are the far fields of the scattered electric field and magnetic field of \eqref{eq:scat:pec:plane} scattered by incident plane wave $E_{k}^i(\hat{z},p;x)$.
 \end{theorem}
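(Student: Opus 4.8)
The plan is to reduce the point-source scattering by $\Omega = D+z$ to the plane-wave scattering by $D$ via a translation of coordinates, and then to feed in the asymptotic transition relation of Lemma~\ref{lem:incident}. First I would introduce the shifted scattered field $\tilde E^s(\tilde x) := E_{k,\Omega}^s(\tilde x + z)$ and $\tilde H^s(\tilde x) := H_{k,\Omega}^s(\tilde x + z)$ for $\tilde x \in D^c$. Since the constant-coefficient Maxwell system in \eqref{eq:scat:pec} is translation invariant, radiating solutions remain radiating under translation, and $\partial\Omega = \partial D + z$, the pair $(\tilde E^s, \tilde H^s)$ is precisely the radiating solution of the exterior PEC problem for $D$ driven by the incident field $E_{k,ed}^i(\,\cdot\, + z, y)$; that is, it satisfies $\nu\times(\tilde E^s + E_{k,ed}^i(\,\cdot\,+z,y)) = 0$ on $\partial D$. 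Well-posedness of the exterior problem \eqref{eq:scat:pec:plane} makes this identification unambiguous.

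Next I would insert the quantitative form of Lemma~\ref{lem:incident}. Its proof in fact delivers, uniformly for $\tilde x$ on the compact set $\partial D$ and uniformly for $\hat z\in\mathbb{S}^2$, the estimate $E_{k,ed}^i(\tilde x+z,y) = \frac{e^{ik|z|-ik\hat z\cdot y}}{4\pi|z|}E_k^i(\hat z,p;\tilde x) + \mathcal{O}(|z|^{-2})$, and likewise for $H$. By linearity of the scattering problem I would decompose $\tilde E^s = \frac{e^{ik|z|-ik\hat z\cdot y}}{4\pi|z|}E_k^s(D,\hat z,p;\,\cdot\,) + R_z$, where $R_z$ is the radiating solution whose tangential boundary data is $-\nu\times(\text{the } \mathcal{O}(|z|^{-2}) \text{ remainder})$. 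The continuous dependence of the scattered field, and hence of its far-field pattern, on the boundary tangential trace then yields that the far-field pattern $R_z^{\infty}$ of $R_z$ is itself $\mathcal{O}(|z|^{-2})$, uniformly in $\hat z$.

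Finally I would evaluate at $\tilde x = x - z$ and pass to the far field. Because $x$ is fixed while $|z|\to\infty$, we have $|x-z|\to\infty$ with $|x-z|^{-1} = \mathcal{O}(|z|^{-1})$, so the standard far-field expansion gives $E_k^s(D,\hat z,p;x-z) = \frac{e^{ik|x-z|}}{|x-z|}[E_k^{\infty}(D,\hat z,p;\widehat{x-z}) + \mathcal{O}(|z|^{-1})]$, the expansion being uniform in $\hat z$ by compactness of $\mathbb{S}^2$. Applying the same expansion to $R_z$ and using $R_z^{\infty} = \mathcal{O}(|z|^{-2})$ shows that $R_z(x-z)$ is smaller than the main term by a relative factor $\mathcal{O}(|z|^{-1})$. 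Collecting the scalar prefactor $\frac{e^{ik|z|-ik\hat z\cdot y}}{4\pi|z|}$, the spherical-wave factor $\frac{e^{ik|x-z|}}{|x-z|}$, the far-field correction $[E_k^{\infty} + \mathcal{O}(|z|^{-1})]$ and the remainder factor $[1+\mathcal{O}(|z|^{-1})]$ reproduces exactly the claimed expansion; the argument for $H_{k,\Omega}^s$ is identical.

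I expect the main obstacle to be the second step: making rigorous the transfer of the $\mathcal{O}(|z|^{-2})$ error from the incident field to the scattered field and its far field, uniformly in $\hat z$. This requires quantitative well-posedness for the exterior Maxwell PEC problem, namely a bounded solution operator together with a bounded trace-to-far-field map, and careful bookkeeping so that all the $\mathcal{O}$-estimates coming from Lemma~\ref{lem:incident} and from the far-field expansion hold uniformly as $\hat z$ ranges over the sphere rather than merely pointwise.
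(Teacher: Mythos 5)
Your proposal is correct and shares the paper's skeleton --- translate so the scatterer becomes $D$, insert the quantitative form of Lemma~\ref{lem:incident}, use linearity and boundedness of the solution operator, then expand in the far field at $x-z$ --- but it implements the crucial middle step differently. The paper is explicit: it represents the scattered field as a combined electromagnetic layer potential (Theorem 6.21 of \cite{CK}), derives the boundary integral equation $(I+M_{k,\Omega}+i\eta N_{k,\Omega}K_{k,\Omega})a=-2\nu\times E_{k,ed}^i$ for the density, and checks by change of variables that the potentials and boundary operators are translation covariant, so that the bounded operator $[\mathcal{S}_{k,D}+i\eta\mathcal{D}_{k,D}](I+M_{k,D}+i\eta N_{k,D}K_{k,D})^{-1}$ carries the $\mathcal{O}(|z|^{-1})$ relative error of the incident field over to the scattered field. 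You instead work at the PDE level: translation invariance of the Maxwell system and of the Silver--M\"uller condition identifies the shifted field as the exterior solution for $D$, and your decomposition into the plane-wave response plus a remainder $R_z$, controlled by quantitative well-posedness (bounded trace-to-solution and trace-to-far-field maps), performs the same transfer abstractly. The two routes are logically equivalent --- the paper's integral equations are precisely one construction of the bounded solution operator you cite --- but yours is leaner and applies verbatim to the penetrable-medium case of Theorem~\ref{thm:medium}, which the paper must redo separately with the Lippmann--Schwinger operator $T_{k,D}$; what the paper's explicitness buys is self-containedness (the boundedness you assume is exhibited rather than cited) and the concrete linear source-to-field map that is reused for the multi-source extension in Remark~\ref{rem:pec}. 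Incidentally, your handling of the tail --- noting that $R_z(x-z)$ gains an extra $\mathcal{O}(|z|^{-1})$ from the $1/|x-z|$ falloff and therefore sits inside the stated relative error --- is more careful than the paper's closing step, which passes from the near-field estimate \eqref{eq:near:pec} to the far-field form of the theorem without comment.
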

 \begin{proof}
 By Theorem 6.21 of \cite{CK}, $ E_{k,\Omega}^s(x) $ and $ H_{k,\Omega}^s(x) $ could be uniquely represented as the following integrals,
 \begin{align}
 E_{k,\Omega}^s(x) &= \curl \int_{\partial \Omega}a(y) \Phi_{k}(x,y)ds(y) + i \eta \curl \curl \int_{\partial \Omega} [\nu \times (S_{0,\Omega}^2a)(y)] \Phi_{k}(x,y)ds(y), \notag  \\
  H_{k,\Omega}^s(x) & = \frac{1}{ik} \curl  E_{k,\Omega}^s(x), \label{eq:e:intepre}
 \end{align}
 where $a(y)$ is a vector density on $\partial \Omega$, and
    $ (S_{0,\Omega}a)(x)$ is as in \cite{CK},
  \begin{equation}
  (S_{0,\Omega}a)(x):= \int_{\partial \Omega} \frac{a(y)}{|x-y|}ds(y), \ \ x \in \partial \Omega.
  \end{equation}

  Denote  $K_{k,\Omega}a: = \nu \times S_{0,\Omega}^2a$ and
 \begin{align}
 (\mathcal{S}_{k,\Omega}a)(x)&:=  \curl \int_{\partial \Omega}a(y) \Phi_{k}(x,y)ds(y), \quad x \in \Omega^c, \\
  (\mathcal{D}_{k,\Omega}a)(x) &:= \curl \curl \int_{\partial \Omega} [\nu \times (S_{0,\Omega}^2a)(y)] \Phi_{k}(x,y)ds(y), \quad x\in \Omega^c,\\
 (M_{k,\Omega}a)(x)&:= 2\int_{\partial \Omega} \nu(x)\times \curl_{x}\{a(y)\Phi_{k}(x,y)\}ds(y), \quad x\in \partial \Omega,\\
 (N_{k,\Omega}b)(x)&:= 2 \nu(x)\times \curl \curl \int_{\Omega} b(y) \Phi_{k}(x,y) ds(y),\quad x\in \partial \Omega.
 \end{align}
   By the jump relations of the vector potentials of $M_{k,\Omega}$ and $N_{k,\Omega}$ \cite{CK}, according to \eqref{eq:e:intepre}, we have
  \begin{equation}\label{eq:density:eq:Omega}
  a(x) + (M_{k,\Omega}a)(x) + i\eta (N_{k,\Omega} K_{k,\Omega}a)(x) = -2 \nu\times E_{k,ed}^i, \quad x \in \partial \Omega.
  \end{equation}
  Now we are in a position to prove this main result in the following steps. 
  
  First, let's consider $ E_{k,\Omega}^s(x+z)$, $x\in \Omega^c$. By direct calculation, we have
  \begin{align*}
  \curl_{x}[f(y)\Phi_{k}(x,y)] &= \nabla_{x}\Phi_{k}(x,y)\times f(y), \\
  \curl_{x} \curl_{x} [f(y)\Phi_{k}(x,y)] & = (-\Delta_{x} + \nabla_{x} \Div_{x})[f(y)\Phi_{k}(x,y)].
  \end{align*}
  Next it is easy to check that $ \Phi_{k}(x+z,y) = \Phi_{k}(x,y-z)$ and
  \[
  \quad \frac{\partial \Phi_{k}(\tilde x,y)}{\partial \tilde x_{j}}|_{\tilde{x} = x+z} =   \frac{\partial \Phi_{k}( x,y-z)}{\partial x_{j}},\quad   \frac{\partial^2 \Phi_{k}(\tilde x,y)}{\partial \tilde x_{j}\partial \tilde x_{i}}|_{\tilde{x} = x+z} =   \frac{\partial \Phi_{k}( x,y-z)}{\partial x_{j}\partial x_{j}}.
  \]
  Thus we have
  \begin{align}
  \curl_{x}[f(y)\Phi_{k}(x,y)](x+z) &=   \curl_{x}[f(y)\Phi_{k}(x,y-z)], \label{eq:curl:trans}\\
  \curl_{x} \curl_{x} [f(y)\Phi_{k}(x,y)](x+z) & = \curl_{x} \curl_{x} [f(y)\Phi_{k}(x,y-z)] .\label{eq:curlcurl:trans}
  \end{align}
Then with \eqref{eq:curl:trans} and \eqref{eq:curlcurl:trans}, we could write $  E_{k,\Omega}^s(x+z)$ as follows,
  \begin{align}
  E_{k,\Omega}^s(x+z) = &(\mathcal{S}_{k,\Omega}a)(x+z)+ i\eta(\mathcal{D}_{k,\Omega}a)(x+z) =  \curl \int_{\partial \Omega}a(y) \Phi_{k}(x,y-z)ds(y) \notag \\
 &+ i \eta \curl \curl \int_{\partial \Omega} [\nu \times (S_{0,\Omega}^2a)(y)] \Phi_{k}(x,y-z)ds(y). \label{eq:E:repre:Omega}
  \end{align}
   By changing variables $y = z+t$ in \eqref{eq:E:repre:Omega}, together with the assumption \eqref{eq:dic:trans}, we have
   \[
     E_{k,\Omega}^s(x+z)
      =  \curl \int_{\partial D}a(z+t) \Phi_{k}(x,t)ds(t) + i \eta \curl \curl \int_{\partial D} [\nu \times (S_{0,\Omega}^2a)(z+t)] \Phi_{k}(x,t)ds(t).
   \]
   Regarding to $ (S_{0,\Omega}a)(z+t)$, still by the change of variables, we have
   \[
   (S_{0,\Omega}a)(z+t)|_{t\in \partial D} = \int_{\partial \Omega} \frac{a(u)}{|t+z-u|}ds(u)=\int_{\partial D} \frac{a(l+z)}{|l-t|}ds(l)= (S_{0,D}a(\cdot+z))(t).
   \]
   We arrive at that
     \begin{equation}\label{eq:E:representaion:D}
     E_{k,\Omega}^s(x+z)
      =  \left([\mathcal{S}_{k,D} + i\eta\mathcal{D}_{k,D}]a(z+\cdot)\right)(x).
   \end{equation}
  
  Next, we  derive the density $a(z+t)$ with $t \in \partial D$.
   Let $x = t+z$ in \eqref{eq:density:eq:Omega} and denote $\tilde a(x)|_{x\in\partial D} = a(t+z)|_{t \in \partial D}$, still by the change of variables and similar arguments as before, we have
   \begin{equation}
     [(I + M_{k,D} + i\eta N_{k,D}K_{k,D})\tilde a](t+z)|_{\partial D} = -2 \nu\times E_{k,ed}^i(t+z,\cdot), \quad t \in \partial D,
   \end{equation}
   where here and in the following, the fixed source position in \eqref{eq:point} is omitted as in $E_{k,ed}^i(t+z,\cdot)$, and we obtain
   \begin{equation}\label{eq:a:trans:D:a}
   a(t+z) = (I + M_{k,D} + i\eta N_{k,D}K_{k,D})^{-1}(-2 \nu\times E_{k,ed}^i(t+z,\cdot)), \quad t \in \partial D.
   \end{equation}
   Substituting $a(t+z)$ as in \eqref{eq:a:trans:D:a} into \eqref{eq:E:representaion:D}, by Lemma \ref{lem:incident}, we have that for any $x \in \Omega^c$,
   \begin{align}
     &E_{k,\Omega}^s(x+z) = [\mathcal{S}_{k,D} + i\eta\mathcal{D}_{k,D}]  (I + M_{k,D} + i\eta N_{k,D}K_{k,D})^{-1}[-2 \nu\times E_{k,ed}^i(t+z,\cdot)], \notag \\
     &= \frac{e^{ik|z|-ik\hat{z} \cdot y}}{4 \pi |z|}  [\mathcal{S}_{k,D} + i\eta\mathcal{D}_{k,D}]  (I + M_{k,D} + i\eta N_{k,D}K_{k,D})^{-1}[-2 \nu\times E_{k}^{i}(\hat{z},p;t)+ \mathcal{O}(|z|^{-1})], \notag \\
     & = \frac{e^{ik|z|-ik\hat{z} \cdot y}}{4 \pi |z|}[E_{k}^s(D,\hat{z},p;x)+ \mathcal{O}(|z|^{-1})]. \label{eq:source:scatter:map}
   \end{align}
   Equation \eqref{eq:source:scatter:map} is derived from similar integral representation for the solution of the scattering by the PEC scatterer $D$ as in \eqref{eq:scat:pec:plane}. Actually, it could be checked that (see \cite{CK})
   \[
   E_{k}^s(D,\hat{z},p;x)=  [\mathcal{S}_{k,D} + i\eta\mathcal{D}_{k,D}]  (I + M_{k,D} + i\eta N_{k,D}K_{k,D})^{-1}(-2 \nu\times E_{k}^{i}(\hat{z},p;t)).
   \]
   Again by the change of variables, we have
   \begin{equation}\label{eq:near:pec}
   E_{k,\Omega}^s(x) =  \frac{e^{ik|z|-ik\hat{z} \cdot y}}{4 \pi |z|}[E_{k}^s(D,\hat{z},p;x-z)+ \mathcal{O}(|z|^{-1})].
   \end{equation}
   Thus, the theorem is proved, by the definition of far field pattern as $|x-z|$ tending to infinity while $|z|\rightarrow \infty
   $.

 \end{proof}
 It could be seen from \eqref{eq:source:scatter:map} that the mapping from the electromagnetic sources to the scattered solutions is linear. Thus, we could extend Theorem \ref{thm:pec} to multiple sources case by the following remark.
\begin{remark}\label{rem:pec}
Suppose that $E_{k,ed}^{i}(x,y_{k})$, $k = 1,2,\cdots, m$ are $m$ electric dipole sources with sources located on $y_{k}$, then the result of Theorem \ref{thm:pec} becomes
\begin{equation*}
E_{k,\Omega}^s(x) = \frac{e^{ik|z|}}{4 \pi |z|} \frac{e^{ik|x-z|}}{|x-z|} \sum_{k=1}^{m} e^{-ik\hat{z} \cdot y_{k}} [E_{k}^{\infty}(D,\hat{z},p;\widehat{x-z})  +  \mathcal{O}(|z|^{-1})][1+\mathcal{O}(|z|^{-1})].
\end{equation*}
And the near fields have the following asymptotic relations,
   \begin{equation}\label{eq:near:pec:multi}
   E_{k,\Omega}^s(x) =  \frac{e^{ik|z|}}{4 \pi |z|}\sum_{k=1}^{m}[e^{-ik\hat{z} \cdot y_{k}}E_{k}^s(D,\hat{z},p;x-z)+ \mathcal{O}(|z|^{-1})].
   \end{equation}
\end{remark}
\subsection{Electromagnetic Scattering in Inhomogeneous Medium}
Denote the refraction index as $n_{k,\Omega}(x)$, and $m_{k,\Omega} =1 -n_{k,\Omega}$. Suppose $m_{k,\Omega}$ has compact support and
$\Omega = \{x \in \mathbb{R}^3: m_{k,\Omega}(x) \neq 0\}$. Assume the real part and the imaginary part of $n_{k,\Omega}$ satisfy the following conditions \cite{CK},
\begin{equation}\label{eq:n:medium}
\Re{n_{k,\Omega}}(x) >0, \ \ \Im{n_{k,\Omega}}(x) \geq 0,  \quad x \in \Omega.
\end{equation}
For the inhomogeneous medium $D$, we still assume the translation relation \eqref{eq:dic:trans}. Thus, we have
\begin{equation}\label{eq:n:medium:D}
n_{k,\Omega}(y) = n_{k,D}(x), \quad y=x+z,  \quad m_{k,D}(x) :=  (1- n_{k,D})(x), \ \   x\in \mathbb{R}^3.
\end{equation}
The governing equations for electromagnetic medium scattering are as follows \cite{CK, RP}, i.e., to find $(E_{k,\Omega}^s, H_{k,\Omega}^s) \in H_{\loc}(\curl;\mathbb{R}^3)\times H_{\loc}(\curl; \mathbb{R}^3)$ such that \cite{PM}
\begin{equation}\label{eq:scat:medium}
\begin{cases}
\curl E_{k,\Omega} -ik H_{k,\Omega} = 0, \quad \curl H_{k,\Omega} +ik n_{k,\Omega}(x)E_{k,\Omega} = 0, \ x \neq y \in \mathbb{R}^3,\\
E_{k,\Omega} = E_{k,\Omega}^{s} + E_{k,ed}^i, \quad H_{k,\Omega} = H_{k,\Omega}^{s} + H_{k,ed}^i,  \\
\displaystyle{\lim_{|x| \rightarrow \infty}(H_{k,\Omega}^{s}\times x -  |x|E_{k,\Omega}^{s}) = 0, \quad \lim_{|x| \rightarrow \infty}(E_{k,\Omega}^{s}\times x +  |x|H_{k,\Omega}^{s} )}=0.
\end{cases}
\end{equation}
Similarly, with the same Silver-M\"{u}ller radiation condition, the scattering of inhomogeneous medium $D$ is to find $( E_{k}^{s}(D, d,p;x), H_{k}^{s}(D,d,p;x) )$ $\in H_{\loc}(\curl;\mathbb{R}^3)$ $\times$ $H_{\loc}(\curl;\mathbb{R}^3)$ such that
\begin{equation}\label{eq:scat:medium:plane}
\begin{cases}
\curl E_{k}^{s}(D, d,p;x) -ik H_{k}^{s}(D,d,p;x) = 0, \\
\curl H_{k}^{s}(D, d,p;x) +ik n_{k, D}E_{k}^{s}(D,d,p;x) = 0,
\end{cases}
\end{equation}
with
\[
E_{k,D}(x) = E_{k}^{s}(D,d,p;x) + E_{k}^i(d,p;x), \  H_{k,D}(x) = H_{k}^{s}(D,d,p;x) + H_{k}^i(d,p;x).
\]
 And for the scattered waves of system \eqref{eq:scat:medium} and \eqref{eq:scat:medium:plane}, we also have the following asymptotic relations by the following theorem, while the displacement $|z|$ is large enough.

\begin{theorem}\label{thm:medium}
 Let $k \in \mathbb{R}_{+}$ be fixed. We have the following asymptotic expansions for electromagnetic medium scattering problem \eqref{eq:scat:medium} under translation relation \eqref{eq:dic:trans},
 \begin{align}
 E_{k,\Omega}^s(x) &=   \frac{e^{ik|z|-ik\hat{z} \cdot y}}{4 \pi |z|} \frac{e^{ik|x-z|}}{|x-z|}[E_{k}^{\infty}(D,\hat{z},p;\widehat{x-z})  +  \mathcal{O}(|z|^{-1})][1+\mathcal{O}(|z|^{-1})], \\
 H_{k,\Omega}^s(x) &=   \frac{e^{ik|z|-ik\hat{z} \cdot y}}{4 \pi |z|} \frac{e^{ik|x-z|}}{|x-z|}[H_{k}^{\infty}(D,\hat{z},p;\widehat{x-z})  + \mathcal{O}(|z|^{-1})][1+\mathcal{O}(|z|^{-1})],
 \end{align}
 for any fixed $x \in \mathbb{R}^3$ as $|z| \rightarrow \infty$ uniformly for all $\hat{z} \in \mathbb{S}^2$, where $E_{k}^{\infty}(D,\hat{z},p;\widehat{x-z})$ and $H_{k}^{\infty}(D,\hat{z},p;\widehat{x-z})$ are the far fields of the scattered electric and magnetic fields of \eqref{eq:scat:medium:plane} scattered by incident plane wave $E_{k}^{i}(\hat{z},p;x)$.
 \end{theorem}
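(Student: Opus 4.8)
The plan is to mirror the proof of Theorem~\ref{thm:pec} essentially line by line, replacing the boundary layer-potential representation of the PEC scattered field by the volume (Lippmann--Schwinger) integral representation appropriate to medium scattering. Concretely, eliminating $H_{k,\Omega}$ from \eqref{eq:scat:medium} shows that the total field obeys $\curl\curl E_{k,\Omega} - k^2 E_{k,\Omega} = -k^2 m_{k,\Omega}E_{k,\Omega}$ together with the constraint $\Div E_{k,\Omega} = -(\grad n_{k,\Omega}/n_{k,\Omega})\cdot E_{k,\Omega}$ coming from $\Div(n_{k,\Omega}E_{k,\Omega})=0$. Hence, by the standard theory of electromagnetic medium scattering (see \cite{CK, RP}), there is a volume operator $\mathcal{P}_{k,\Omega}$, whose kernel is assembled from $\Phi_k$ together with $m_{k,\Omega}$ and $\grad n_{k,\Omega}$ through the operations $\curl\curl$ and $\grad\Div$, such that $E_{k,\Omega}^s = \mathcal{P}_{k,\Omega}E_{k,\Omega}$ for fixed $x$ and $(I - \mathcal{P}_{k,\Omega})E_{k,\Omega} = E_{k,ed}^i$ on $\Omega$. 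Condition \eqref{eq:n:medium} guarantees, via Riesz--Fredholm theory, that $I-\mathcal{P}_{k,\Omega}$ is boundedly invertible, so $E_{k,\Omega} = (I-\mathcal{P}_{k,\Omega})^{-1}E_{k,ed}^i$. This plays the role of \eqref{eq:e:intepre} and \eqref{eq:density:eq:Omega}, with the boundary density $a$ replaced by the interior total field and the boundary operators replaced by volume ones.

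Next I would carry out the same translation computation as in \eqref{eq:E:repre:Omega}--\eqref{eq:E:representaion:D}. Because $\Phi_k(x+z,y) = \Phi_k(x,y-z)$ and $\curl$, $\Div$, $\grad$ are translation invariant, the identities \eqref{eq:curl:trans}--\eqref{eq:curlcurl:trans} extend verbatim to the volume kernel, so the substitution $y = z+t$ turns $E_{k,\Omega}^s(x+z)$ into $(\mathcal{P}_{k,D}E_{k,\Omega}(z+\cdot))(x)$; here I use the contrast and index translation relations $m_{k,\Omega}(z+t)=m_{k,D}(t)$, $n_{k,\Omega}(z+t)=n_{k,D}(t)$ from \eqref{eq:n:medium:D} to reduce the integral over $\Omega$ to one over $D$ and to identify the translated operator with $\mathcal{P}_{k,D}$. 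Applying the same change of variables to the Lippmann--Schwinger equation gives $(I-\mathcal{P}_{k,D})E_{k,\Omega}(z+\cdot)=E_{k,ed}^i(z+\cdot,\cdot)$ on $D$, hence
\[
E_{k,\Omega}^s(x+z)=\mathcal{P}_{k,D}\,(I-\mathcal{P}_{k,D})^{-1}\,E_{k,ed}^i(z+\cdot,\cdot)\,(x),
\]
where the solution operator $(I-\mathcal{P}_{k,D})^{-1}$ is now independent of $z$ and acts on the fixed bounded domain $D$.

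Finally I would invoke Lemma~\ref{lem:incident} exactly as in \eqref{eq:source:scatter:map}. Since $D$ is bounded by \eqref{eq:assume:1}, the incident dipole field satisfies $E_{k,ed}^i(t+z,y)=\frac{e^{ik|z|-ik\hat{z}\cdot y}}{4\pi|z|}E_{k}^{i}(\hat{z},p;t)+\mathcal{O}(|z|^{-2})$ uniformly for $t\in D$ and $\hat{z}\in\mathbb{S}^2$. Pulling the scalar prefactor out by linearity and recognising that $\mathcal{P}_{k,D}(I-\mathcal{P}_{k,D})^{-1}E_{k}^{i}(\hat{z},p;\cdot)=E_{k}^{s}(D,\hat{z},p;\cdot)$ is precisely the plane-wave medium scattering solution of \eqref{eq:scat:medium:plane}, one obtains $E_{k,\Omega}^s(x+z)=\frac{e^{ik|z|-ik\hat{z}\cdot y}}{4\pi|z|}[E_{k}^{s}(D,\hat{z},p;x)+\mathcal{O}(|z|^{-1})]$. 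Replacing $x$ by $x-z$ and passing to the far-field asymptotics as $|x-z|\to\infty$ yields the claimed expansion for $E_{k,\Omega}^s$, and the expansion for $H_{k,\Omega}^s$ follows identically using $H_{k,\Omega}^s=(1/ik)\curl E_{k,\Omega}^s$.

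The genuine difference from the PEC case, and the main obstacle, is the strongly singular nature of the volume operator $\mathcal{P}_{k,D}$ produced by the $\grad\Div$ term: one must work in the correct function space so that $\mathcal{P}_{k,D}$ is a well-defined (Cauchy principal value) operator to which Riesz--Fredholm applies and for which $(I-\mathcal{P}_{k,D})^{-1}$ is bounded, and one must verify that the translation change of variables is compatible with this singular kernel. The required uniformity in $z$ is then automatic, because after the change of variables the operator no longer depends on $z$ and acts on the fixed bounded $D$; consequently, checking that the $\mathcal{O}(|z|^{-1})$ remainder from Lemma~\ref{lem:incident} survives application of the bounded solution operator and the volume potential uniformly in $\hat{z}$ is routine, using the boundedness of $D$ and of $(I-\mathcal{P}_{k,D})^{-1}$.
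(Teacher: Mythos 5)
Your proposal follows essentially the same route as the paper's own proof: the volume operator $\mathcal{P}_{k,\Omega}$ you construct is exactly the paper's Lippmann--Schwinger operator $T_{k,\Omega}$ (cited there from Theorem 9.1 of \cite{CK} and Theorem 2.42 of \cite{RP}, with invertibility of $I-T_{k,\Omega}$ in $C(\Omega)$ for $n_{k,\Omega}\in C^{1,\alpha}$), and your subsequent steps—translation invariance of the kernel via $\Phi_k(x+z,y)=\Phi_k(x,y-z)$ and \eqref{eq:n:medium:D}, application of Lemma~\ref{lem:incident}, identification of $T_{k,D}(I-T_{k,D})^{-1}E_k^i(\hat z,p;\cdot)$ with the plane-wave solution of \eqref{eq:scat:medium:plane}, and passage to the far field—match the paper's argument step for step. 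The proof is correct as proposed.
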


\begin{proof}
By theorem 9.1 of \cite{CK}, the scattering field has the following
integral representation, with notation $E_{k,\Omega}^{s} := T_{k,\Omega}E_{k,\Omega}$ defined as follows
\begin{equation}\label{eq:repre:medium}
  -k^2 \int_{\Omega}\Phi_{k}(x,y)m_{k,\Omega}(y)E_{k,\Omega}(y)dy  + \grad \int_{\Omega}\frac{1}{n_{k,\Omega}(y)}\grad n_{k,\Omega}(y) \cdot E_{k,\Omega}(y) \Phi_{k}(x,y)dy.
\end{equation}
By theorem 2.42 of \cite{RP}, \eqref{eq:repre:medium} also could be used for scattering problems with electric dipole source incident wave when the source point $y \in \Omega^c$. And the operator $(I -T_{k,\Omega} )$ is continuously invertible in $C(\Omega)$ while $n_{k,\Omega} \in C^{1,\alpha}(\mathbb{R}^3)$ with $0<\alpha <1$. We have
\begin{equation}\label{eq:eq:elec}
E_{k,\Omega}  = (I - T_{k,\Omega})^{-1}E_{k,ed}^{i},\quad x \in \Omega.
\end{equation}
Considering $E_{k,\Omega}^s(x+z)$, with \eqref{eq:repre:medium}, it could be written as
\begin{align}
E_{k,\Omega}^s(x+z)=&-k^2 \int_{\Omega}\Phi_{k}(x+z,y)m_{k,\Omega}(y)E_{k,\Omega}(y)dy \notag \\
 &+ \grad \int_{\Omega}\frac{1}{n_{k,\Omega}(y)}\grad n_{k,\Omega}(y) \cdot E_{k,\Omega}(y) \Phi_{k}(x+z,y)dy,\notag \\
 =&  -k^2 \int_{\Omega}\Phi_{k}(x,y-z)m_{k,\Omega}(y)E_{k,\Omega}(y)dy  \notag \\
&+ \grad \int_{\Omega}\frac{1}{n_{k,\Omega}(y)}\grad n_{k,\Omega}(y) \cdot E_{k,\Omega}(y) \Phi_{k}(x,y-z)dy. \notag
\end{align}
Setting $t=y-z$ and denoting $E_{k,D}(t+z)|_{t \in D}:= E_{k,\Omega}(y)|_{y \in \Omega}$, by the change of variables and noting that the Jacobian matrix of the change of variables is the identity matrix in $\mathbb{R}^3$, together with \eqref{eq:n:medium:D} and \eqref{eq:n:medium}, we have
\begin{align}
E_{k,\Omega}^s(x+z)  =&  -k^2 \int_{D}\Phi_{k}(x,t)m_{k,D}(t)E_{k,D}(t+z)dy  \\
                    &+ \grad \int_{D}\frac{1}{n_{k,D}(t)}\grad n_{k,D}(t) \cdot E_{k,D}(t+z) \Phi_{k}(x,t)dy. \notag
\end{align}
Then we obtain
\begin{equation}\label{eq:e:omega:z}
E_{k,\Omega}^s(x+z)  =  (T_{k,D} E_{k,D}(\cdot+z))(x).
\end{equation}
Then we turn to calculating $E_{k,D}(t+z)$. By \eqref{eq:eq:elec}, we see $(I - T_{k,\Omega})E_{k,\Omega}(x)  = E_{k,ed}^{i}(x)$, i.e.,
\begin{align}
E_{k,ed}^{i}(x)&=E_{k,\Omega}(x) + k^2 \int_{\Omega}\Phi_{k}(x,y)m_{k,\Omega}(y)E_{k,\Omega}(y)dy \notag   \\
 &- \grad \int_{\Omega}\frac{1}{n_{k,\Omega}(y)}\grad n_{k,\Omega}(y) \cdot E_{k,\Omega}(y) \Phi_{k}(x,y)dy. \label{eq:e:omega:medium}
\end{align}
Again setting $x = t+z$ and by the change of variables, \eqref{eq:e:omega:medium} becomes
\begin{align}
E_{k,ed}^{i}(t+z)&-E_{k,D}(t+z) =k^2 \int_{\Omega}\Phi_{k}(t+z,y)m_{k,\Omega}(y)E_{k,\Omega}(y)dy  \notag \\
&- \grad \int_{\Omega}\frac{1}{n_{k,\Omega}(y)}\grad n_{k,\Omega}(y) \cdot E_{k,\Omega}(y) \Phi_{k}(t,y-z)dy, \notag \\
  &= k^2 \int_{\Omega}\Phi_{k}(t,y-z)m_{k,\Omega}(y)E_{k,\Omega}(y)dy \label{eq:mega:22}\\
& - \grad \int_{\Omega}\frac{1}{n_{k,\Omega}(y)}\grad n_{k,\Omega}(y) \cdot E_{k,\Omega}(y) \Phi_{k}(t,y-z)dy. \label{eq:mega:2}
\end{align}
Still using changing variable $\tilde y = y-z$, \eqref{eq:mega:22} and \eqref{eq:mega:2} could be written as
\begin{align}
&k^2 \int_{D}\Phi_{k}(t,\tilde y)m_{k,D}(\tilde y)E_{k,D}(\tilde y +z)d\tilde y \notag  \\
&- \grad \int_{D}\frac{1}{n_{k,D}(\tilde{y})}\grad n_{k,D}(\tilde y) \cdot E_{k,D}(\tilde y +z )\Phi_{k}(t,\tilde{y})d \tilde y. \notag
\end{align}
What follows is
\begin{equation}
E_{k,D}(t+z) = (I - T_{k,D})^{-1}E_{k,ed}^{i}(t+z,\cdot), \quad t \in D.
\end{equation}
Substituting it into \eqref{eq:e:omega:z}, we have
\begin{equation}
E_{k,\Omega}^s(x+z)  =  [T_{k,D} (I - T_{k,D})^{-1}E_{k,ed}^{i}(t+z,\cdot)](x).
\end{equation}
Then by Lemma \ref{lem:incident}, we arrive at that
\begin{align}
E_{k,\Omega}^s(x+z)  &=  \frac{e^{ik|z|-ik\hat{z} \cdot y}}{4 \pi |z|}[\left(T_{k,D} (I - T_{k,D})^{-1}E_{k}^{i}(\hat{z},p;t)\right)(x) + \mathcal{O}(|z|^{-1})], \label{eq:map:source:scatter:medium} \\
& = \frac{e^{ik|z|-ik\hat{z} \cdot y}}{4 \pi |z|}[E_{k}^s(D,\hat{z},p;x)+ \mathcal{O}(|z|^{-1})]. \notag
\end{align}
Actually for the medium $D$ scattered by the plane wave \eqref{eq:scat:medium:plane}, we have (see \cite{CK})
\[
E_{k}^s(D,\hat{z},p;x) = \left[T_{k,D} (I - T_{k,D})^{-1}(E_{k}^{i}(\hat{z},p;\cdot))\right](x).
\]
Still by the change of variables, we have
\begin{equation}\label{eq:nearfield:re}
E_{k,\Omega}^s(x) = \frac{e^{ik|z|-ik\hat{z} \cdot y}}{4 \pi |z|}[E_{k}^s(D,\hat{z},p;x-z)+ \mathcal{O}(|z|^{-1})].
\end{equation}
The theorem is proved by taking the far field pattern as $|x-z|$ tends to infinity.
\end{proof}

Similar to Remark \ref{rem:pec}, due to the linearity of the mapping $T_{k,D} (I - T_{k,D})^{-1}$ from the electromagnetic sources to the scattered solutions as in \eqref{eq:map:source:scatter:medium}, we could extend Theorem \ref{thm:medium} to multiple sources case by the following remark.

\begin{remark}\label{rem:medium}
 Suppose that $E_{k,ed}^{i}(x,y_{k})$, $k = 1,2,\cdots, m$ are $m$ electric dipole sources that with sources located on $y_{k}$, then the result of Theorem \ref{thm:pec} becomes
\begin{equation*}
E_{k,\Omega}^s(x) = \frac{e^{ik|z|}}{4 \pi |z|} \frac{e^{ik|x-z|}}{|x-z|} \sum_{k=1}^{m} e^{-ik\hat{z} \cdot y_{k}} [E_{k}^{\infty}(D,\hat{z},p;\widehat{x-z})  +  \mathcal{O}(|z|^{-1})][1+\mathcal{O}(|z|^{-1})].
\end{equation*}
And for the near fields, we have the following asymptotic relation
\begin{equation}\label{eq:nearfield:re:multi}
E_{k,\Omega}^s(x) = \frac{e^{ik|z|}}{4 \pi |z|}\sum_{k=1}^{m}[e^{-ik\hat{z} \cdot y_{k}}E_{k}^s(D,\hat{z},p;x-z)+ \mathcal{O}(|z|^{-1})].
\end{equation}
\end{remark}

\section{A two-stage reconstruction algorithm}\label{sec:recon:algo}
\subsection{Location Determination}\label{subsec:location}
We will present a two-stage algorithm for gesture recognition using electromagnetic waves, i.e., locating the positions and determining the \emph{gestures} of the scatterers from the dictionary. We will first locate the scatterers with low frequency scattered field by sending low frequency electromagnetic point source waves. Then we determine the \emph{gestures} of the scatterers by regular frequency scattered field by sending regular frequency electromagnetic incident point source. According to Theorems \ref{thm:pec} and \ref{thm:medium}, all scattered fields produced by incident point sources for $\Omega$ could be approximated by the far field data in a precomputed dictionary, namely a database produced by scattering amplitude of incident plane waves impinging upon the translated scatterer $D$. The computations can be carried out beforehand and are collected in the precomputed gesture dictionary. Here and in the following, we assume there is only one point source located at the origin, i.e., $y_1=0$ with $m=1$ as in Remarks \ref{rem:pec} and \ref{rem:medium}.

We need the following theorem first, which is a classical result for the low frequency asymptotics for electromagnetic wave scattering problems, see chapter 3 of \cite{DR}.
\begin{theorem}\label{thm:lowfre:asym}
With the plane incident wave \eqref{eq:plane}, the far field pattern $E_{k}^{\infty}(D,d,p;\hat{x})$ and $H_{k}^{\infty}(D,d,p;\hat{x})$
have the following asymptotic behavior, no matter $D$ is a PEC scatterer as in \eqref{eq:scat:pec:plane} or an inhomogeneous medium as in \eqref{eq:scat:medium:plane},
\begin{align}
E_{k}^{\infty}(D,d,p;\hat{x}) &= \frac{(ik)^3}{4 \pi}[\hat{x} \times (\hat{x}\times \textbf{a}(D)) - \hat{x}\times \textbf{b}(D)] + \mathcal{O}(k^4), \\
H_{k}^{\infty}(D,d,p;\hat{x}) &= \frac{(ik)^3}{4 \pi}[\hat{x} \times (\hat{x}\times \textbf{c}(D)) - \hat{x}\times \textbf{d}(D)] + \mathcal{O}(k^4),
\end{align}
where $\textbf{a}(D)$, $\textbf{b}(D)$, $\textbf{c}(D)$, $\textbf{d}(D)$ are constant vectors that only depend on $p$, $d$, $D$.
\end{theorem}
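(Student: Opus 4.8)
The plan is to read the far field pattern off the integral representations already established in the proofs of Theorems~\ref{thm:pec} and~\ref{thm:medium}, and then to expand every $k$-dependent ingredient in powers of $k$, keeping the leading non-vanishing term. For the PEC obstacle I would start from
\[
E_{k}^s(D,d,p;x) = [\mathcal{S}_{k,D} + i\eta\mathcal{D}_{k,D}]\,(I + M_{k,D} + i\eta N_{k,D}K_{k,D})^{-1}\bigl(-2\nu\times E_{k}^{i}(d,p;\cdot)\bigr),
\]
and for the penetrable medium from $E_{k}^s(D,d,p;x) = T_{k,D}(I - T_{k,D})^{-1}E_{k}^{i}(d,p;\cdot)$. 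Using the radiating behaviour $\Phi_{k}(x,y) = \frac{e^{ik|x|}}{4\pi|x|}\bigl(e^{-ik\hat{x}\cdot y} + \mathcal{O}(|x|^{-1})\bigr)$, and recording that each $\curl$ acting on the radiating kernel contributes a factor $ik\,\hat{x}\times(\cdot)$, the far field $E_{k}^{\infty}(D,d,p;\hat{x})$ becomes in both cases an integral of the solving density against $e^{-ik\hat{x}\cdot y}$ carried by prefactors $ik\,\hat{x}\times(\cdot)$ and $-k^{2}\,\hat{x}\times(\hat{x}\times(\cdot))$ inherited from the single- and double-layer structure (resp.\ from the two terms of $T_{k,D}$).

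The next step is the $k$-expansion. There are three sources of $k$-dependence: the incident field carries the explicit amplitude $ik$ and an exponential $e^{ikx\cdot d} = 1 + \mathcal{O}(k)$; the operators $\mathcal{S}_{k,D},\mathcal{D}_{k,D},M_{k,D},N_{k,D}$ (resp.\ $T_{k,D}$) expand about their static $k=0$ counterparts through $\Phi_{k} = \frac{1}{4\pi|x-y|} + \frac{ik}{4\pi} + \mathcal{O}(k^{2})$, with the inverse operators expanded by a Neumann series around the invertible static operator; and the far field kernel expands as $e^{-ik\hat{x}\cdot y} = 1 - ik\hat{x}\cdot y + \mathcal{O}(k^{2})$. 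The leading density is thus $\mathcal{O}(k)$, its $k=0$ part solving the associated static problem for $D$. Inserting these expansions, the two surviving $\mathcal{O}(k^{3})$ terms organise into the announced structure: the static density integrated over $\partial D$ (resp.\ $D$) against the lowest orders of the kernel produces two constant vectors, one contracted with $\hat{x}\times(\hat{x}\times(\cdot))$ --- the induced electric dipole moment, giving $\mathbf{a}(D)$ --- and one contracted with $\hat{x}\times(\cdot)$ --- the induced magnetic dipole moment, giving $\mathbf{b}(D)$. Both are contractions of static polarizability/shape tensors of $D$ with the incident data, hence depend only on $p$, $d$ and $D$, which yields the formula for $E_{k}^{\infty}$.

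For the magnetic far field I would avoid a second computation and instead invoke the far field Maxwell relation $H_{k}^{\infty}(\hat{x}) = \hat{x}\times E_{k}^{\infty}(\hat{x})$ forced by the Silver-M\"{u}ller condition. Using $\hat{x}\times(\hat{x}\times(\hat{x}\times\mathbf{a})) = -\hat{x}\times\mathbf{a}$ and $\hat{x}\times(\hat{x}\times\mathbf{b}) = (\hat{x}\cdot\mathbf{b})\hat{x} - \mathbf{b}$, the electric structure is converted into the same form with $\mathbf{c}(D) = -\mathbf{b}(D)$ and $\mathbf{d}(D) = \mathbf{a}(D)$, which gives the stated formula for $H_{k}^{\infty}$ with no new information required.

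The main obstacle is the order bookkeeping: one must verify that the \emph{nominal} $\mathcal{O}(k^{2})$ contribution cancels, so that the leading behaviour is genuinely $(ik)^{3}$. This reflects the absence of monopole radiation and the tangential, divergence-free structure of radiating Maxwell far fields --- in the medium case the would-be $\mathcal{O}(k^{2})$ piece is longitudinal (parallel to $\hat{x}$) and is killed by tangentiality of the far field, while in the PEC case the corresponding term requires that the static combined-field density integrate to zero over $\partial D$. Establishing these cancellations uniformly for both representations, and controlling the Neumann-series tails of the inverse operators to certify the $\mathcal{O}(k^{4})$ remainder, is the delicate part; by contrast, once the projector identities above are in hand the structural form of the answer follows cleanly, so I would run the PEC and medium cases in parallel and record only the point where their static limits differ. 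This parallels the classical low-frequency machinery of \cite{DR}, adapted here to the combined-field and Lippmann--Schwinger representations.
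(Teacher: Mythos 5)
You should first know the point of comparison: the paper does not prove Theorem \ref{thm:lowfre:asym} at all. It is invoked as a classical result with an explicit pointer to Chapter 3 of \cite{DR}, where the expansion is obtained by the Stevenson--Kleinman low-frequency method: the fields themselves are expanded in powers of $k$, the Maxwell system is reduced order by order to exterior potential-theoretic (Laplace) problems, and the leading far-field coefficients are identified with the induced electric and magnetic dipole moments of $D$. Your route through the combined-field representation and the Lippmann--Schwinger equation is therefore necessarily different from ``the paper's proof,'' and several of its structural endpoints are sound: the dipole form of the $\mathcal{O}(k^3)$ term, the interpretation of $\mathbf{a}(D)$, $\mathbf{b}(D)$ as contractions of static polarizability tensors with the incident data $(d,p)$, and especially the economical derivation of the magnetic formula from the electric one via $H_{k}^{\infty}(\hat{x})=\hat{x}\times E_{k}^{\infty}(\hat{x})$, which correctly yields $\mathbf{c}(D)=-\mathbf{b}(D)$ and $\mathbf{d}(D)=\mathbf{a}(D)$.

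The genuine gap sits at the first substantive step of your plan, and it is not bookkeeping. Expanding $(I+M_{k,D}+i\eta N_{k,D}K_{k,D})^{-1}$ as a Neumann series ``around the invertible static operator'' presupposes that the $k=0$ limit of the combined-field operator is invertible and that its inverse bounds the dynamic inverse uniformly as $k\to 0$. Neither is established in your sketch, and this is exactly the notorious low-frequency breakdown of Maxwell boundary integral formulations: the invertibility theory you quote (Theorem 6.21 of \cite{CK}) is proved for fixed $k>0$ and rests on uniqueness for the exterior radiating problem at that frequency, while at $k=0$ the Maxwell system degenerates into decoupled electro- and magnetostatics; even when the static operator is injective, the irrotational (charge-type) and solenoidal (current-type) components of the density scale differently in $k$, so the uniform statement ``the density is $\mathcal{O}(k)$ with static leading part'' is precisely what must be proved, not assumed --- classically this is handled by a Hodge/charge--current decomposition, which is the machinery \cite{DR} builds in place of a Neumann series. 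The same incompleteness affects the two cancellations you flag: that the static PEC density integrates to zero over $\partial D$ (needed to kill the nominal $\mathcal{O}(k^2)$ single-layer term), and that the longitudinal $\mathcal{O}(k^2)$ term in the medium case disappears (here the needed identity is $\Div(n_{k,D}E_{k,D})=0$, obtained by taking the divergence of the second Maxwell equation). You argue both by physical plausibility (no monopole radiation, tangentiality of far fields) rather than from the representations you start from; since the content of the theorem is exactly that the expansion begins at $k^3$ with an $\mathcal{O}(k^4)$ remainder, deferring these cancellations defers the theorem. The medium half of your plan is in better shape, since $I-T_{0,D}$ corresponds to the static dielectric problem and is invertible for $n>0$, but the PEC half as sketched would not go through without this substantial additional work.
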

Theorem \ref{thm:lowfre:asym} characterizes the most important and leading terms in the low frequency asymptotic analysis, which we would use to design efficient indicators.  With Theorems \ref{thm:pec} and \ref{thm:medium}, letting $|x| \rightarrow \infty$,  we can get the far field pattern of scattered field $E_{k,\Omega}^{s}(x)$ scattering from the corresponding PEC scatterer or inhomogeneous medium,
\begin{equation}\label{eq:asym:trans:far:ele}
E_{k,\Omega}^{\infty}(\hat{x}) = \frac{e^{ik|z|}}{4 \pi |z|} e^{-ik\hat{x}\cdot z}  [E_{k}^{\infty}(D,\hat{z},p_{k};\widehat{x-z})  +  \mathcal{O}(|z|^{-1})][1+\mathcal{O}(|z|^{-1})],
\end{equation}
with the far field $E_{k,\Omega}^{\infty}(\hat{x})$  belonging to $T^{2}(\mathbb{S}^2)$.

Now we can give the first stage algorithm for locating the scatterer $\Omega = z+D$. Here, different from the acoustic case \cite{LWY}, we use the far field instead.
Denote $E_{k}^{\infty}(D,z;\hat{x})=E_{k,\Omega}^{\infty}(\hat{x})$, and introduce
\[
\mathring{E}_{k, H_1}^{\infty}(\tilde{z};\hat{x}) = \frac{e^{ik |\tilde{z}|}}{4\pi |\tilde{z}|} e^{-ik \hat{x}\cdot \tilde{z}}U_{1}^{m}(\hat{x}),\quad \mathring{E}_{k, H_2}^{\infty}(\tilde{z};\hat{x}) = \frac{e^{ik |\tilde{z}|}}{4\pi |\tilde{z}|} e^{-ik \hat{x}\cdot \tilde{z}}V_{1}^{m}(\hat{x}), \quad m=-1,0,1,
\]
where $U_{1}^m$ and $V_{1}^m$ are the vector spherical harmonics \cite{CK},
\[
U_{1}^m(\hat{x}): = \frac{1}{2} \text{Grad}Y_{n}^m(\hat{x}), \quad V_{1}^m(\hat{x}): = \frac{1}{2} \hat{x} \times \text{Grad}Y_{n}^m(\hat{x}), \quad m=-1,0,1.
\]
We propose the following indicator function,
\begin{equation}\label{eq:indicator}
I_{k}(D,z, \tilde{z}): = \frac{\sqrt{\sum_{j=1}^2\sum_{m=-1}^1|\langle E_{k}^{\infty}(D,z;\cdot), \mathring{E}_{k,m, H_j}^{\infty}(\tilde{z};\cdot)\rangle_{T^{2}(\mathbb{S}^2)}|^2}}{\|E_{k}^{\infty}(D,z;\cdot)\|_{T^2(\mathbb{S}^2)} 1/{(4\pi |\tilde{z}|)}}, \quad \tilde{z} \in S,
\end{equation}
Here and in the following, $S$ denotes the set of sampling points for locating the position of gesture $\Omega$ and we assume $S$ includes the position of $\Omega$. A more practical indicator function is
\begin{equation}\label{eq:indicator:gamma}
I_{k,\Gamma}(D,z, \tilde{z}): = \frac{\sqrt{\sum_{j=1}^2\sum_{m=-1}^1|\langle E_{k}^{\infty}(D,z;\cdot), \mathring{E}_{k,m, H_j}^{\infty}(\tilde{z};\cdot)\rangle_{T^{2}({\Gamma})}|^2}}{\|E_{k}^{\infty}(D,z;\cdot)\|_{T^2({\Gamma})} 1/{(4\pi |\tilde{z}|)}}, \quad \tilde{z} \in S,
\end{equation}
where $\Gamma$ is part of $\mathbb{S}^2$.
\begin{theorem}\label{thm:asym:in}
Let $\textbf{a}(D)$ and $\textbf{b}(D)$ be given in Theorem \ref{thm:lowfre:asym} ($D$ is a PDE scatterer or a medium), assuming
\[
\hat{x} \times (\hat{x}\times \textbf{a}(D)) - \hat{x}\times \textbf{b}(D) \neq 0,\quad \forall D \in \mathfrak{D},
\]
then we have the following asymptotic expansions
\begin{equation}
\lim_{k\rightarrow 0}I_{k}(D, z;\tilde{z}) = \mathring{I}_{k}(z,\tilde{z})[1+\mathcal{O}(|z|^{-1})], \quad |z| \rightarrow \infty,
\end{equation}
uniformly for all $D \in \mathfrak{D}$, $\hat{z}\in \mathbb{S}^2$ and $\tilde{z}\in S$, where
\begin{equation}\label{eq:indicator:asym}
\mathring{I}_{k}(z, \tilde{z}): = \frac{\sqrt{\sum_{j=1}^2\sum_{m=-1}^1|\langle \tilde{E}_{k}^{\infty}(D,z;\cdot), \mathring{E}_{k,m, H_j}^{\infty}(\tilde{z};\cdot)\rangle_{T^{2}(\mathbb{S}^2)}|^2}}{\|\tilde{E}_{k}^{\infty}(D,z;\cdot)\|_{T^2(\mathbb{S}^2)} 1/{(4\pi |\tilde{z}|)}}, \quad \tilde{z} \in S.
\end{equation}
Here
\[
\tilde{E}_{k}^{\infty}(D,z;\cdot) =  \frac{e^{ik|z|}}{4 \pi |z|} e^{-ik\hat{x}\cdot z} (\sum_{j=-1}^1a_{1}^j U_{1}^{j}(\hat{x}) +
\sum_{j=-1}^1 b_{1}^j V_{1}^{j}(\hat{x})),
\]
and the constants $a_{1}^j$ and $b_{1}^j$ only depend on $\textbf{a}(D)$ and $\textbf{b}(D)$.
 The unique maximum of $\mathring{I}_{k}(z, \tilde{z})$ is obtained at $\tilde{z}=z $ with maximum value 1.
\end{theorem}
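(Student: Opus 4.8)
The plan is to rewrite the indicator as a \emph{normalized orthogonal-projection quotient} and then read off both assertions from the geometry of that projection. First I would substitute the low-frequency expansion of Theorem~\ref{thm:lowfre:asym} into the far-field transition formula \eqref{eq:asym:trans:far:ele}. Since $\hat{x}\times(\hat{x}\times\textbf{a}(D)) = (\hat{x}\cdot\textbf{a}(D))\hat{x}-\textbf{a}(D)$ is the negative tangential part of the constant vector $\textbf{a}(D)$ and $\hat{x}\times\textbf{b}(D)$ is again tangential, both are finite combinations of the degree-one vector spherical harmonics $U_1^m,V_1^m$; this is exactly the field $\sum_j a_1^j U_1^j+\sum_j b_1^j V_1^j$ in $\tilde{E}_k^\infty(D,z;\cdot)$, with $a_1^j,b_1^j$ the (linear in $\textbf{a}(D),\textbf{b}(D)$) expansion coefficients. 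Because the indicator \eqref{eq:indicator} is homogeneous of degree zero in its first slot, the common scalar prefactor $\tfrac{(ik)^3}{4\pi}\tfrac{e^{ik|z|}}{4\pi|z|}$ cancels between numerator and norm, so to leading order in $k$ one may replace $E_k^\infty(D,z;\cdot)$ by $\tilde{E}_k^\infty(D,z;\cdot)$. The additive $\mathcal{O}(|z|^{-1})$ in \eqref{eq:asym:trans:far:ele} and the $\mathcal{O}(k^4)$ remainder perturb numerator and denominator relatively by $\mathcal{O}(|z|^{-1})$, the nondegeneracy hypothesis keeping $\|\tilde{E}_k^\infty(D,z;\cdot)\|_{T^2(\mathbb{S}^2)}$ bounded away from zero so the quotient is stable; this yields $\lim_{k\to0}I_k=\mathring{I}_k[1+\mathcal{O}(|z|^{-1})]$.

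Next I would exploit the crucial algebraic fact that the modulating phase $e^{-ik\hat{x}\cdot\tilde{z}}$ is unimodular for real $\hat{x}\cdot\tilde{z}$, hence cancels from every $T^2(\mathbb{S}^2)$ inner product among the six fields $e^{-ik\hat{x}\cdot\tilde{z}}U_1^m(\hat{x})$, $e^{-ik\hat{x}\cdot\tilde{z}}V_1^m(\hat{x})$, $m\in\{-1,0,1\}$. By the orthonormality of $\{U_1^m,V_1^m\}$ (cf.~\cite{CK}) these six fields are orthonormal for every fixed $\tilde{z}$; write $\mathcal{V}_{\tilde{z}}$ for their span. Using that the scalar $1/(4\pi|\tilde{z}|)$ carried by each $\mathring{E}^\infty_{k,m,H_j}(\tilde{z};\cdot)$ matches the factor $1/(4\pi|\tilde{z}|)$ in the denominator of \eqref{eq:indicator:asym}, a direct computation collapses the indicator to
\[
\mathring{I}_k(z,\tilde{z}) = \frac{\|\mathcal{P}_{\mathcal{V}_{\tilde{z}}}\tilde{E}_k^\infty(D,z;\cdot)\|_{T^2(\mathbb{S}^2)}}{\|\tilde{E}_k^\infty(D,z;\cdot)\|_{T^2(\mathbb{S}^2)}},
\]
where $\mathcal{P}_{\mathcal{V}_{\tilde{z}}}$ is the orthogonal projection onto $\mathcal{V}_{\tilde{z}}$. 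Bessel's inequality then gives $\mathring{I}_k(z,\tilde{z})\le 1$ for all $\tilde{z}$, with equality precisely when $\tilde{E}_k^\infty(D,z;\cdot)\in\mathcal{V}_{\tilde{z}}$.

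It remains to show that $\tilde{E}_k^\infty(D,z;\cdot)\in\mathcal{V}_{\tilde{z}}$ holds if and only if $\tilde{z}=z$. At $\tilde{z}=z$ the field equals $\tfrac{e^{ik|z|}}{4\pi|z|}e^{-ik\hat{x}\cdot z}(\sum_j a_1^jU_1^j+\sum_j b_1^jV_1^j)\in\mathcal{V}_z$ by construction, so $\mathring{I}_k(z,z)=1$. For $\tilde{z}\ne z$, membership would force $e^{ik\hat{x}\cdot(\tilde{z}-z)}W(\hat{x})=\tilde{W}(\hat{x})$ for some degree-one field $\tilde{W}$, where $W:=\sum_j a_1^jU_1^j+\sum_j b_1^jV_1^j\neq0$ by the nondegeneracy assumption. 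Expanding the plane wave $e^{ik\hat{x}\cdot(\tilde{z}-z)}$ through the Jacobi--Anger/Funk--Hecke formula (cf.~\cite{CK}) into scalar spherical harmonics of all degrees and applying the Gaunt selection rule to the product with the degree-one field $W$ produces nonvanishing components outside degree one as soon as $k|\tilde{z}-z|>0$. This contradicts $\tilde{W}$ being of pure degree one, so $\mathring{I}_k(z,\tilde{z})<1$ strictly for $\tilde{z}\ne z$, establishing $\tilde{z}=z$ as the unique maximizer with value $1$.

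The hard part will be this last strict separation: turning ``translation by a nonzero vector moves $W$ out of the degree-one subspace'' into a rigorous non-membership statement, uniformly in $D\in\mathfrak{D}$ and $\tilde{z}\in S$. The cleanest route is the harmonic-analysis bookkeeping via Jacobi--Anger together with the Gaunt selection rules, verifying that at least the degree-zero or degree-two component of $e^{ik\hat{x}\cdot(\tilde{z}-z)}W$ is nonzero whenever $\tilde{z}\ne z$, with the assumption $W\neq0$ preventing a degenerate cancellation. Everything else --- the low-frequency reduction and the projection/Bessel estimate --- is routine once the orthonormality of the phase-modulated degree-one harmonics is observed.
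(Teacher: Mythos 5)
Your proposal follows the same route as the paper's own proof for everything the paper actually proves: substitute the low-frequency asymptotics of Theorem~\ref{thm:lowfre:asym} into \eqref{eq:asym:trans:far:ele}, identify the leading far field as the degree-one combination $\sum_j a_1^j U_1^j + \sum_j b_1^j V_1^j$ carried by the unimodular phase $e^{-ik\hat{x}\cdot z}$, observe that the six phase-modulated fields $e^{-ik\hat{x}\cdot\tilde{z}}U_1^m$, $e^{-ik\hat{x}\cdot\tilde{z}}V_1^m$ stay orthonormal in $T^2(\mathbb{S}^2)$, and conclude $\mathring{I}_k(z,\tilde{z})\le 1$; your projection/Bessel formulation is the same estimate the paper states via Cauchy--Schwarz, and your remark that the nondegeneracy hypothesis keeps $\|\tilde{E}_k^\infty\|_{T^2(\mathbb{S}^2)}$ away from zero makes explicit a point the paper uses silently. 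Where you genuinely differ is the last claim: the paper's proof \emph{stops} at the upper bound, neither evaluating $\mathring{I}_k(z,z)=1$ nor giving any argument that the maximizer is unique, whereas you isolate this as the hard step and sketch an attack (Jacobi--Anger expansion of $e^{ik\hat{x}\cdot(\tilde{z}-z)}$ plus Gaunt selection rules) showing that a nonzero translation phase pushes a nonzero degree-one field out of the degree-one subspace. Be aware that your sketch, as written, is not yet a proof: since a degree-$n$ mode of the plane wave times a degree-one field spreads over degrees $n-1,n,n+1$, the degree-$L$ component of $e^{ik\hat{x}\cdot v}W$ receives contributions from the three Bessel coefficients $j_{L-1}(k|v|),j_L(k|v|),j_{L+1}(k|v|)$, so excluding total cancellation for every $L\ge 2$ requires an extra argument --- for instance, restrict to a great circle through $\hat{v}$ on which some polynomial component $W_i\not\equiv 0$; membership in $\mathcal{V}_{\tilde z}$ would force $e^{ik|v|\cos t}=\tilde{W}_i(t)/W_i(t)$, a ratio of trigonometric polynomials, which is impossible for $k|v|\neq 0$ by comparing growth in complex $t$ (the left side grows doubly exponentially, the right side at most exponentially). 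With such a completion your argument would actually prove more than the paper does, since the ``unique maximum'' assertion in the theorem is left unjustified there.
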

\begin{proof}
By equation \eqref{eq:asym:trans:far:ele} and Theorem \ref{thm:lowfre:asym}, we see
\begin{equation}
E_{k,\Omega}^{\infty}(\hat{x}) = \frac{e^{ik|z|}}{4 \pi |z|} e^{-ik\hat{x}\cdot z} (\sum_{j=-1}^1a_{1}^j U_{1}^{j}(\hat{x}) +
\sum_{j=-1}^1 b_{1}^j V_{1}^{j}(\hat{x}))[1+\mathcal{O}(|z|^{-1}) + \mathcal{O}(|x|^{-1})].
\end{equation}
Substituting it into $I_{k}$ in \eqref{eq:indicator}, we get \eqref{eq:indicator:asym}. And since
\[
e^{-ik \hat{x}\cdot \tilde{z}}U_{1}^{m}(\hat{x}), \quad  e^{-ik \hat{x}\cdot \tilde{z}}V_{1}^{m}(\hat{x}),\quad m=-1,0,1,
\]
are normalized and are orthogonal to each other in $T^2(\mathbb{S}^2)$, by Cauchy-Schwarz inequality $\langle a,b \rangle_{T^2(\mathbb{S}^2)}  \leq \|a\|_{T^2(\mathbb{S}^2)}\|b\|_{T^2(\mathbb{S}^2)}$, we have
\[
\sum_{j=1}^2\sum_{m=-1}^1|\langle \tilde{E}_{k}^{\infty}(D,z;\cdot), \mathring{E}_{k,m, H_j}^{\infty}(\tilde{z};\cdot)\rangle_{T^{2}(\mathbb{S}^2)}|^2 \leq \frac{1}{(4\pi |\tilde{z}|)^2} \|\tilde{E}_{k}^{\infty}(D,z;\cdot)\|_{T^2(\mathbb{S}^2)}^2,
\]
which leads to $\mathring{I}_{k}(z, \tilde{z}) \leq 1$.
\end{proof}

Thus we could locate the gesture (scatterer) by finding
\begin{equation}\label{eq:argmax:position}
\mathring{z}= \argmax_{\tilde{z}}I_k(D, z;\tilde{z}), \quad \tilde{z} \in S,
\end{equation}
as in \eqref{eq:indicator} on the sampling set $S$ with low frequency data.
Here, the far field pattern are employed instead of near field for locating the scatterer.
The far field could be approximately measured by the near filed around more than 10 wavelength away, which is feasible. For a timely recognition, we only make use of point source waves of wavenumber $k \lesssim 1$.
\subsection{Shape Determination}
After determining the location $\mathring{z}$ through \eqref{eq:argmax:position} of the scatterer, we will present the second stage algorithm for determining the shape of the scatterers with dictionary data. With Theorems \ref{thm:pec}, \ref{thm:medium}, \ref{thm:lowfre:asym} and \ref{thm:asym:in}, we give the following indicator functionals,
\begin{equation}\label{eq:indicator:J}
J_{k}(D_i, D_j;z, \mathring{z}): = \frac{|\langle E_{k}^{\infty}(D_i,z;\hat{x}),  \hat{E}_{k}^{\infty}(D_j,\mathring{z};\hat{x})\rangle_{T^2(\mathbb{S}^2)} | }{\|E_{k}^{\infty}(D_i,z;\hat{x})\|_{T^2(\mathbb{S}^2)} \|\hat{E}_{k}^{\infty}(D_j,\mathring{z};\hat{x})\|_{T^2(\mathbb{S}^2)} },
\end{equation}
\begin{equation}
\hat{E}_{k}^{\infty}(D_j,\mathring{z};\hat{x}): = \frac{e^{ik|\mathring{z}|}}{4 \pi |\mathring{z}|} e^{-ik\hat{x}\cdot \mathring{z}}  E_{k}^{\infty}(D_j,\hat{\mathring{z}},p_{k};\widehat{x-\mathring{z}}),
\end{equation}
where $E_{k}^{\infty}(D_i,z;\hat{x}) = E_{k,D_i+z}^{\infty}(\hat{x})$ as in \eqref{eq:asym:trans:far:ele} and $\hat{\mathring{z}}: = \mathring{z}/|\mathring{z}|$.
With these preparation, we could present our second stage algorithm for the shape determinations.
Through the following scheme, we could find the shape of $D$ by the dictionary data.
\begin{theorem}\label{thm:shape:deter}
Suppose there exists a constant $c_0>0$ such that $\|E_{k}^{\infty}(D_i,z;\hat{x})\|_{T^2(\mathbb{S}^2)}  \geq c_0$  for all $D_i \in \mathfrak{D}$. The for any sufficient small $\varepsilon>0$ there exists $R_0$ and $\sigma>0$ such that
if $|z| \geq R_0$ and $|z-\mathring{z}| \leq \sigma$,
\begin{equation}\label{eq:far:indica2:near}
|J_{k}(D_i,D_j;z, \mathring{z}) - \hat{J}_{k}(D_i,D_j;z)| \leq \varepsilon, \quad \forall D_i, D_j \in \mathfrak{D},
\end{equation}
where
\[
\hat{J}_{k}(D_i,D_j;z): = J_{k}(D_i,D_j;z,z).
\]
If further assume $E_{k}^{\infty}(D_i,z;\hat{x})$ and $E_{k}^{\infty}(D_j,z;\hat{x})$ are linearly independent for all $D_i, D_j \in \mathfrak{D}$, $i\neq j$, then we have
\begin{equation}\label{eq:indi22:sepa}
J_{k}(D_i,D_i;z, \mathring{z}) > J_{k}(D_i,D_j;z, \mathring{z}), \quad \forall i \neq j.
\end{equation}
\end{theorem}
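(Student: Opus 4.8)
The plan is to establish the two displayed claims separately---reading \eqref{eq:far:indica2:near} as a stability (continuity) statement in the location variable and \eqref{eq:indi22:sepa} as a separation statement---and then to combine them. The guiding observation is that $J_k$ is a normalized correlation, hence invariant under multiplication of either of its two field arguments by a nonzero scalar; in particular the prefactor $e^{ik|\mathring z|}/(4\pi|\mathring z|)$ in $\hat E_k^\infty$ is irrelevant, and only the shape of the fields as functions of $\hat x$ enters.

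For the stability estimate \eqref{eq:far:indica2:near}, I note that in $J_k(D_i,D_j;z,\mathring z)$ the first argument $E_k^\infty(D_i,z;\cdot)$ is independent of $\mathring z$, so the entire $\mathring z$--dependence sits in $\hat E_k^\infty(D_j,\mathring z;\cdot)$; after discarding the scalar prefactor this field is, up to normalization, $e^{-ik\hat x\cdot\mathring z}E_k^\infty(D_j,\hat{\mathring z},p_k;\hat x)$. I would prove that it depends continuously on $\mathring z$, uniformly in $\hat x\in\mathbb S^2$, through three ingredients: (i) the phase obeys $|e^{-ik\hat x\cdot\mathring z}-e^{-ik\hat x\cdot z}|\le k|\mathring z-z|\le k\sigma$; (ii) the incident direction $\hat{\mathring z}=\mathring z/|\mathring z|$ moves by $\mathcal O(|\mathring z-z|/|z|)=\mathcal O(\sigma/R_0)$, which is where $|z|\ge R_0$ enters (it also keeps $\mathring z$ away from the origin, so that $\hat{\mathring z}$ is well defined); and (iii) the plane-wave far field $E_k^\infty(D_j,d,p_k;\cdot)$ depends continuously on the incident direction $d$ in $T^2(\mathbb S^2)$, a standard fact of scattering theory. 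Combining (i)--(iii) makes $\|\hat E_k^\infty(D_j,\mathring z;\cdot)-\hat E_k^\infty(D_j,z;\cdot)\|_{T^2(\mathbb S^2)}$ (after normalization) as small as we like once $\sigma$ is small and $R_0$ large; since the cosine similarity is Lipschitz in each argument wherever the denominators are bounded below---ensured by the hypothesis $\|E_k^\infty(D_i,z;\cdot)\|\ge c_0$ together with (iii)---the bound \eqref{eq:far:indica2:near} follows.

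For the separation \eqref{eq:indi22:sepa}, I would first analyze the exact-location quantity $\hat J_k(D_i,D_j;z)=J_k(D_i,D_j;z,z)$. Formula \eqref{eq:asym:trans:far:ele}, specialized to the single source $y_1=0$, shows that $E_k^\infty(D_i,z;\cdot)$ and $\hat E_k^\infty(D_i,z;\cdot)$ differ only by a relative $\mathcal O(|z|^{-1})$ error, i.e. are asymptotically parallel; with $\|E_k^\infty(D_i,\hat z,p_k;\cdot)\|\ge c_0$ this gives the diagonal lower bound $\hat J_k(D_i,D_i;z)\ge 1-\mathcal O(|z|^{-1})$. In the off-diagonal terms the common phase $e^{-ik\hat x\cdot z}$ cancels in the normalized correlation, so up to $\mathcal O(|z|^{-1})$ one gets $\hat J_k(D_i,D_j;z)=\rho_{ij}+\mathcal O(|z|^{-1})$, where $\rho_{ij}$ is the cosine of the angle between $E_k^\infty(D_i,\hat z,p_k;\cdot)$ and $E_k^\infty(D_j,\hat z,p_k;\cdot)$ in $T^2(\mathbb S^2)$. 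The linear-independence hypothesis forces strict Cauchy-Schwarz, so $\rho_{ij}<1$, and since $\mathfrak D$ is finite the gap $1-\rho$ with $\rho:=\max_{i\ne j}\rho_{ij}$ is uniform. Choosing $R_0$ so large that all the $\mathcal O(|z|^{-1})$ corrections are below $(1-\rho)/4$ yields $\hat J_k(D_i,D_i;z)>\hat J_k(D_i,D_j;z)$ for every $i\ne j$; finally invoking \eqref{eq:far:indica2:near} with $\varepsilon<(1-\rho)/4$ transports this strict ordering from $\mathring z=z$ to all $\mathring z$ with $|z-\mathring z|\le\sigma$, which is exactly \eqref{eq:indi22:sepa}.

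I expect the principal obstacle to be uniformity rather than any single estimate: one must verify that the continuous dependence of the far field on incident direction in (iii), the lower bounds on the norms, and the $\mathcal O(|z|^{-1})$ remainders in \eqref{eq:asym:trans:far:ele} all hold uniformly over $D_i,D_j\in\mathfrak D$ and over $\hat z\in\mathbb S^2$, so that one pair $(R_0,\sigma)$ serves all indices and all incident directions simultaneously. Once that uniformity is secured, both the stability and the separation reduce to elementary perturbation estimates for the cosine similarity.
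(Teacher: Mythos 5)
Your proposal is correct, and for the stability estimate \eqref{eq:far:indica2:near} it follows the same skeleton as the paper's proof: the paper isolates the two perturbations $\|E_{k}^{\infty}(D_j,z;\cdot) - \hat{E}_{k}^{\infty}(D_j,z;\cdot)\|_{T^2(\mathbb{S}^2)} \leq \epsilon$ (from Theorem \ref{thm:pec}) and $\|\hat{E}_{k}^{\infty}(D_j,z;\cdot) - \hat{E}_{k}^{\infty}(D_j,\mathring{z};\cdot)\|_{T^2(\mathbb{S}^2)} \leq \epsilon$ (attributed to ``analytic continuity of the far field pattern''), and then invokes continuity of the inner product and norm, exactly your Lipschitz-continuity-of-cosine-similarity step. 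Where you differ is in execution, and the differences are improvements. First, instead of citing analytic continuity you prove the $\mathring{z}$-perturbation bound directly by splitting it into the phase estimate $|e^{-ik\hat{x}\cdot\mathring{z}}-e^{-ik\hat{x}\cdot z}|\leq k\sigma$, the direction estimate $|\hat{\mathring{z}}-\hat{z}|=\mathcal{O}(\sigma/R_0)$, and continuous dependence of the plane-wave far field on the incident direction; this is more elementary, and it makes explicit why the hypothesis $|z|\geq R_0$ is needed even for the continuity step. Second, and more substantively, the paper disposes of the separation claim \eqref{eq:indi22:sepa} with the single sentence that it ``could be directly verified by the Cauchy-Schwarz inequality,'' which conceals the real work: the diagonal entry $J_k(D_i,D_i;z,\mathring{z})$ is \emph{not} exactly $1$, and strict Cauchy--Schwarz at one point gives no quantitative gap, so one must show the diagonal is $1-\mathcal{O}(|z|^{-1})$ via \eqref{eq:asym:trans:far:ele}, that the off-diagonal correlations are bounded by some $\rho<1$ uniformly (strict Cauchy--Schwarz plus finiteness of $\mathfrak{D}$), and then choose $R_0$, $\sigma$, $\varepsilon$ so the ordering survives all the $\mathcal{O}(|z|^{-1})$ and $\mathcal{O}(\sigma)$ corrections. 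Your gap argument supplies precisely this, so your write-up is complete where the paper's is only a gesture. One caveat worth making explicit if you write this up: your uniform gap $1-\rho$ rests on reading the linear-independence hypothesis as applying to the limiting plane-wave far fields $E_k^{\infty}(D_i,\hat{z},p_k;\cdot)$, uniformly over $\hat{z}\in\mathbb{S}^2$ (by compactness and continuity in $\hat z$), rather than to the exact far fields at the particular $z$ as the theorem literally states; this strengthening is in fact necessary, since independence at a single $z$ alone yields a gap that could degenerate as $|z|\to\infty$ and be swallowed by the $\mathcal{O}(|z|^{-1})$ remainders.
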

\begin{proof}
Consider the PEC scatterer for example, while the inhomogeneous medium case is similar.
By Theorem \ref{thm:pec} and the corresponding far field pattern as in \eqref{eq:asym:trans:far:ele}, for any fixed and small $\epsilon$, there exists $R>0$ such that while $|z| > R_0$, we have
\begin{equation}\label{eq:1:E:E1}
\|E_{k}^{\infty}(D_j,z;\hat{x}) - \hat{E}_{k}^{\infty}(D_j, z;\hat{x})\|_{T^2(\mathbb{S}^2)} \leq \epsilon, \quad \forall D_j.
\end{equation}
Furthermore in light of the analytic continuity of the far field pattern \cite{CK}, there exists some small constant $\sigma>0$ such that we have
\begin{equation}\label{eq:2:E:ring}
\|\hat{E}_{k}^{\infty}(D_j, z;\hat{x}) - \hat{E}_{k}^{\infty}(D_j,\mathring{z};\hat{x})\|_{T^2(\mathbb{S}^2)} \leq \epsilon, \quad \forall D_j,
\end{equation}
whenever $|z-\mathring{z}| \leq \sigma$.
 
Combining \eqref{eq:1:E:E1} and \eqref{eq:2:E:ring} and noticing that both the inner product $\langle E_{k}^{\infty}(D_i,z;\hat{x}), \cdot \rangle_{T^2(\mathbb{S}^2)}$ and the norm $\|\cdot\|_{T^2(\mathbb{S}^2)}$ in the definitions of $J_k$ and $\hat{J}_k$ are continuous respecting to the space $T^2(\mathbb{S}^2)$, we could arrive at \eqref{eq:far:indica2:near} by standard mathematical analysis. In addition, \eqref{eq:indi22:sepa} could be directly verified  by the Cauchy-Schwarz inequality.

\end{proof}

Actually, for the shape determination, with \eqref{eq:near:pec:multi} in Remark \ref{rem:pec} and
\eqref{eq:nearfield:re:multi} in Remark \ref{rem:medium}, it could be seen that the near field could also be used. And we could get the following theorem as Theorem \ref{thm:shape:deter} by replacing the far fields with near fields in the assumptions. The proof is quite similar as Theorem \ref{thm:shape:deter} and we omit it here.

\begin{theorem}\label{thm:shape:deter:bdry}
The following indicator function could be used if the dictionary data include near fields scattered data.
\begin{equation}\label{eq:indicator:J:gamma}
J_{k}^s(D_i, D_j;z, \mathring{z}): = \frac{|\langle E_{k}^{s}(D_i,z;x),  \hat{E}_{k}^{s}(D_j,\mathring{z};x)\rangle_{L^{2}(\Gamma)} | }{\|E_{k}^{s}(D_i,z;x)\|_{L^{2}(\Gamma)} \|\hat{E}_{k}^{s}(D_j,\mathring{z};x)\|_{L^{2}(\Gamma)}}
\end{equation}
\begin{equation}
\hat{E}_{k}^{s}(D_j,\mathring{z};x): = \frac{e^{ik|\mathring{z}|}}{4 \pi |\mathring{z}|} E_{k}^s(D,\hat{\mathring{z}},p_{k};x-\mathring{z}),
\end{equation}
where $E_{k}^{s}(D_i,z;x) = E_{k,D_i+z}^{s}(x)$ and $\hat{\mathring{z}}: = \mathring{z}/|\mathring{z}|$, $\Gamma$ is a bounded measurement surface in $\mathbb{R}^3$ of finite aperture,
and $E_{k}^s(D,\hat{\mathring{z}},p_{k};x-\mathring{z})$ is the scattered near field of scatterer $D$ measured and stored in the dictionary data set.

Then we can reconstruct the shape by near field data as follows.
Suppose there exists a constant $c_1>0$ such that $\|E_{k}^{s}(D_i,z;x)\|_{L^{2}(\Gamma)}  \geq c_1$  for all $D_i \in \mathfrak{D}$. The for any sufficient small $\varepsilon_1>0$ there exists $R_1$ and $\sigma_1>0$ such that
if $|z| \geq R_1$ and $|z-\mathring{z}| \leq \sigma_1$,
\begin{equation}
|J_{k}^s(D_i,D_j;z, \mathring{z}) - \hat{J}_{k}^s(D_i,D_j;z)| \leq \varepsilon_1, \quad \forall D_i, D_j \in \mathfrak{D},
\end{equation}
where
\[
\hat{J}_{k}^s(D_i,D_j;z): = J_{k}^s(D_i,D_j;z,z).
\]
If further assume $E_{k}^{s}(D_i,z;x)$ and $E_{k}^{s}(D_j,z;x)$ are linearly independent for all $D_i, D_j \in \mathfrak{D}$, $i\neq j$, then we have
\begin{equation}
J_{k}^s(D_i,D_i;z, \mathring{z}) > J_{k}^s(D_i,D_j;z, \mathring{z}), \quad \forall i \neq j.
\end{equation}
\end{theorem}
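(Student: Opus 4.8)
The plan is to transplant the proof of Theorem~\ref{thm:shape:deter} from the far-field setting to near-field data on $\Gamma$, replacing $E_{k}^{\infty}$ by $E_{k}^{s}$ and the $T^{2}(\mathbb{S}^2)$ inner product by the $L^{2}(\Gamma)$ inner product. As there, the two building blocks are a large-displacement approximation and a location-continuity estimate; the first inequality then follows from continuity of the normalised quotient $J_{k}^{s}$, and the strict separation from Cauchy--Schwarz.

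First I would invoke the near-field asymptotics of Remarks~\ref{rem:pec} and~\ref{rem:medium}, that is \eqref{eq:near:pec:multi} and \eqref{eq:nearfield:re:multi}, specialised to a single source at the origin so that the phase reduces to $e^{ik|z|}/(4\pi|z|)$. Writing $E_{k}^{s}(D_j,z;x)=E_{k,D_j+z}^{s}(x)$ and recalling the definition of $\hat{E}_{k}^{s}(D_j,z;x)$, these relations give
\begin{equation*}
E_{k}^{s}(D_j,z;x)-\hat{E}_{k}^{s}(D_j,z;x)=\frac{e^{ik|z|}}{4\pi|z|}\,\mathcal{O}(|z|^{-1}),
\end{equation*}
uniformly for $x$ on the fixed surface $\Gamma$, which lies in $(D_j+z)^{c}$ by Assumption~\eqref{eq:assume:2}. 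Since the quotient $J_{k}^{s}$ is invariant under multiplication of all its near-field arguments by a common nonzero scalar, the prefactor $e^{ik|z|}/(4\pi|z|)$ is immaterial and the effective comparison is between the $O(1)$ shape parts $E_{k}^{s}(D,\hat{z},p_k;x-z)$; the $\mathcal{O}(|z|^{-1})$ remainder is negligible once $R_1$ is large, so that $\|E_{k}^{s}(D_j,z;x)-\hat{E}_{k}^{s}(D_j,z;x)\|_{L^{2}(\Gamma)}\le\epsilon$ for every $D_j\in\mathfrak{D}$.

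Next I would establish the location-continuity estimate $\|\hat{E}_{k}^{s}(D_j,z;x)-\hat{E}_{k}^{s}(D_j,\mathring{z};x)\|_{L^{2}(\Gamma)}\le\epsilon$ whenever $|z-\mathring{z}|\le\sigma_1$. Here $\hat{E}_{k}^{s}(D_j,\mathring{z};x)$ is assembled from the exterior scattered near field $E_{k}^{s}(D,\hat{\mathring{z}},p_k;x-\mathring{z})$, which depends on $\mathring{z}$ through the scalar prefactor $e^{ik|\mathring{z}|}/(4\pi|\mathring{z}|)$, the incident direction $\hat{\mathring{z}}$, and the spatial argument $x-\mathring{z}$. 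Each channel is continuous---indeed real-analytic---in $\mathring{z}$ as long as $x-\mathring{z}$ stays in $D^{c}$, which is guaranteed for $\Gamma$ well separated from the scatterer and $\sigma_1$ small. I would combine the continuous dependence of the exterior field on its incident direction with its smoothness away from $\partial D$ and the boundedness of $\Gamma$ to extract $\sigma_1$, uniformly over the finite dictionary $\mathfrak{D}$.

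With both estimates in hand, the triangle inequality bounds $\|E_{k}^{s}(D_j,z;x)-\hat{E}_{k}^{s}(D_j,\mathring{z};x)\|_{L^{2}(\Gamma)}$ uniformly; since the numerator and denominator of $J_{k}^{s}$ are continuous functionals on $L^{2}(\Gamma)$ and the denominator is bounded below by $c_1$, a routine perturbation argument yields the first inequality with the prescribed $\varepsilon_1$. For the strict separation I would argue as in Theorem~\ref{thm:shape:deter}: the diagonal quotient $J_{k}^{s}(D_i,D_i;z,\mathring{z})$ lies within $\varepsilon_1$ of $\hat{J}_{k}^{s}(D_i,D_i;z)$, which is itself close to $1$ because $\hat{E}_{k}^{s}(D_i,z;x)$ nearly coincides with $E_{k}^{s}(D_i,z;x)$, whereas Cauchy--Schwarz together with the assumed linear independence of $E_{k}^{s}(D_i,z;x)$ and $E_{k}^{s}(D_j,z;x)$ forces $\hat{J}_{k}^{s}(D_i,D_j;z)$ strictly below $1$, so the off-diagonal quotient stays beneath the diagonal one once $\varepsilon_1$ is small. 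I expect the main obstacle to be the location-continuity step: the hard part will be controlling the dependence on $\mathring{z}$ through the incident direction $\hat{\mathring{z}}$ simultaneously with the spatial shift $x\mapsto x-\mathring{z}$, keeping the bound uniform over $\mathfrak{D}$ while ensuring $\Gamma-\mathring{z}$ never meets any scatterer.
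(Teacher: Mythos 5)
Your proposal is correct and follows essentially the same route as the paper: the paper omits this proof, stating it is ``quite similar'' to that of Theorem~\ref{thm:shape:deter}, and your argument is exactly that transplantation --- the near-field asymptotics of Remarks~\ref{rem:pec} and~\ref{rem:medium} in place of \eqref{eq:asym:trans:far:ele}, a continuity-in-$\mathring{z}$ estimate in place of the analytic continuity of the far field, then the perturbation bound on the normalised quotient and Cauchy--Schwarz for the strict separation. Your explicit handling of the scale-invariance of $J_k^s$ under the common prefactor $e^{ik|z|}/(4\pi|z|)$ is a welcome extra precision, but it does not change the approach.
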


\section{Numerical experiments and discussions}\label{sec:num}

In this section, we present numerical experiments to illustrate the effectiveness and efficiency of the proposed
recovery method, which has been successfully employed in gesture recognition using full aperture far field data or limited aperture near field data.
All the numerical experiments are carried out using MATLAB R2017a on a
Lenovo workstation with 2.3GHz Intel Xeon E5-2670 v3 processor and 512GB of RAM.

\begin{figure}
\hfill{}\includegraphics[width=0.48\textwidth]{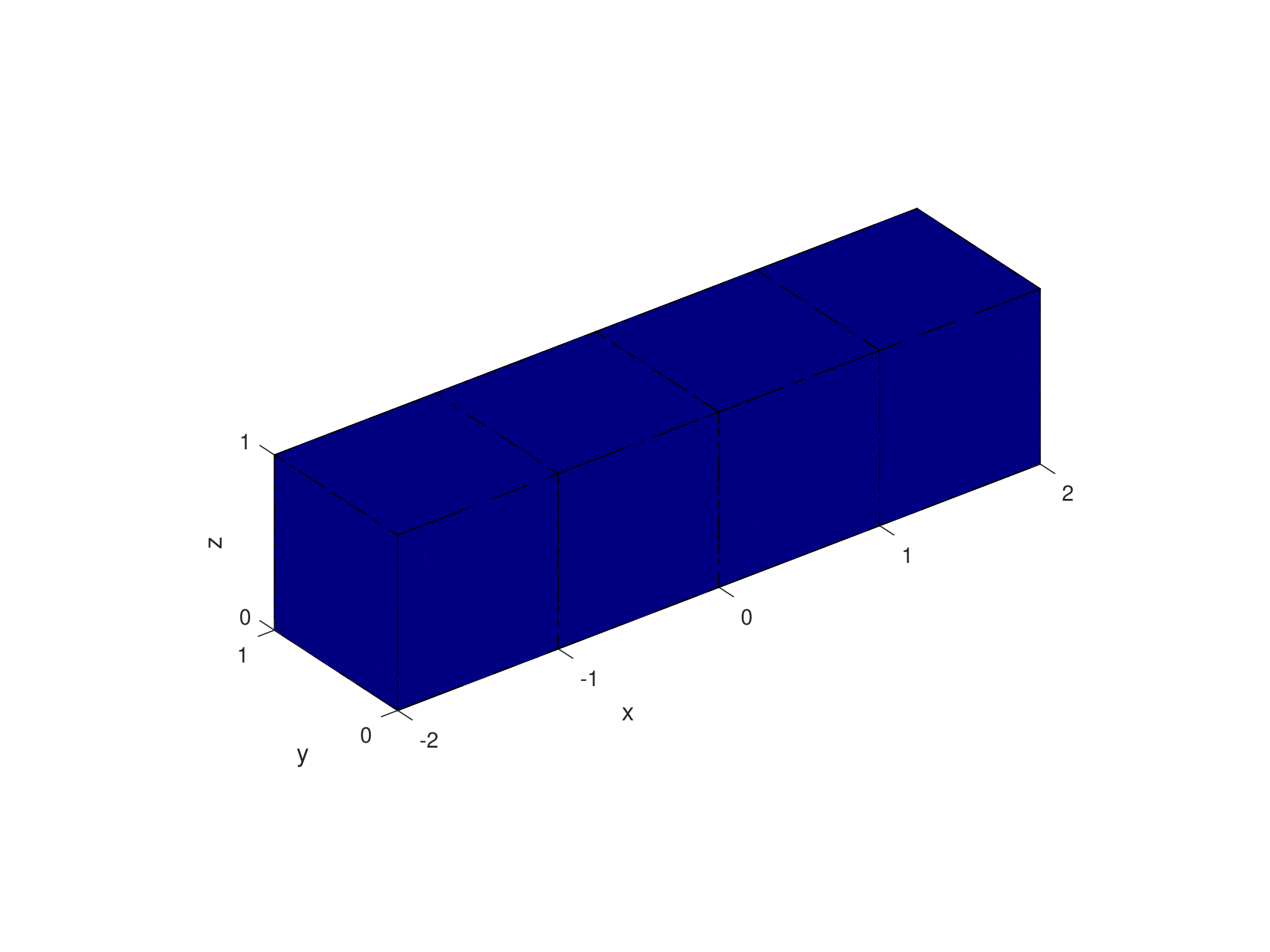}\hfill{}\includegraphics[width=0.48\textwidth]{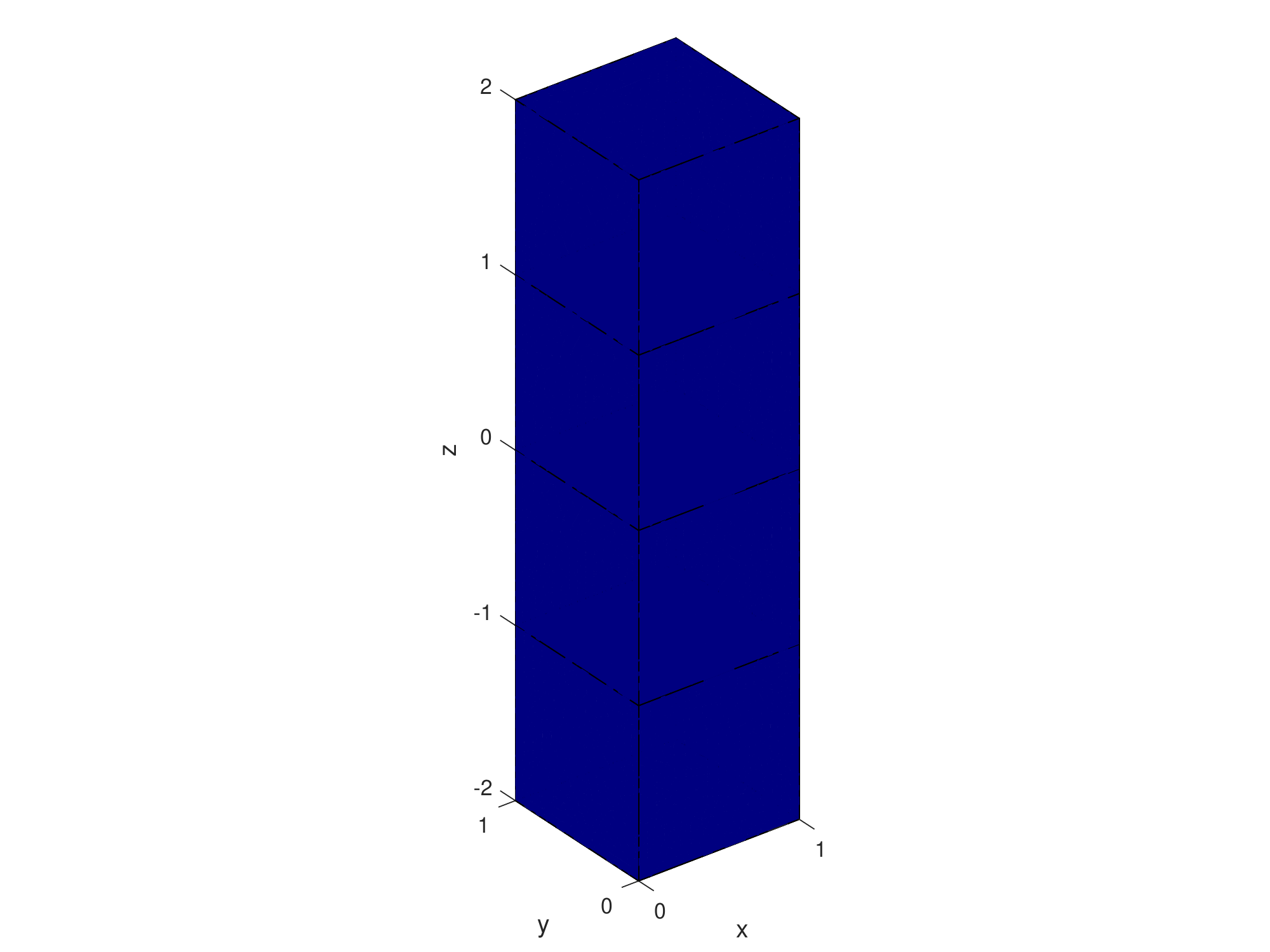}\hfill{}

\hfill{}(a)\hfill{}\hfill{}
(b)\hfill{}

\hfill{}\includegraphics[width=0.48\textwidth]{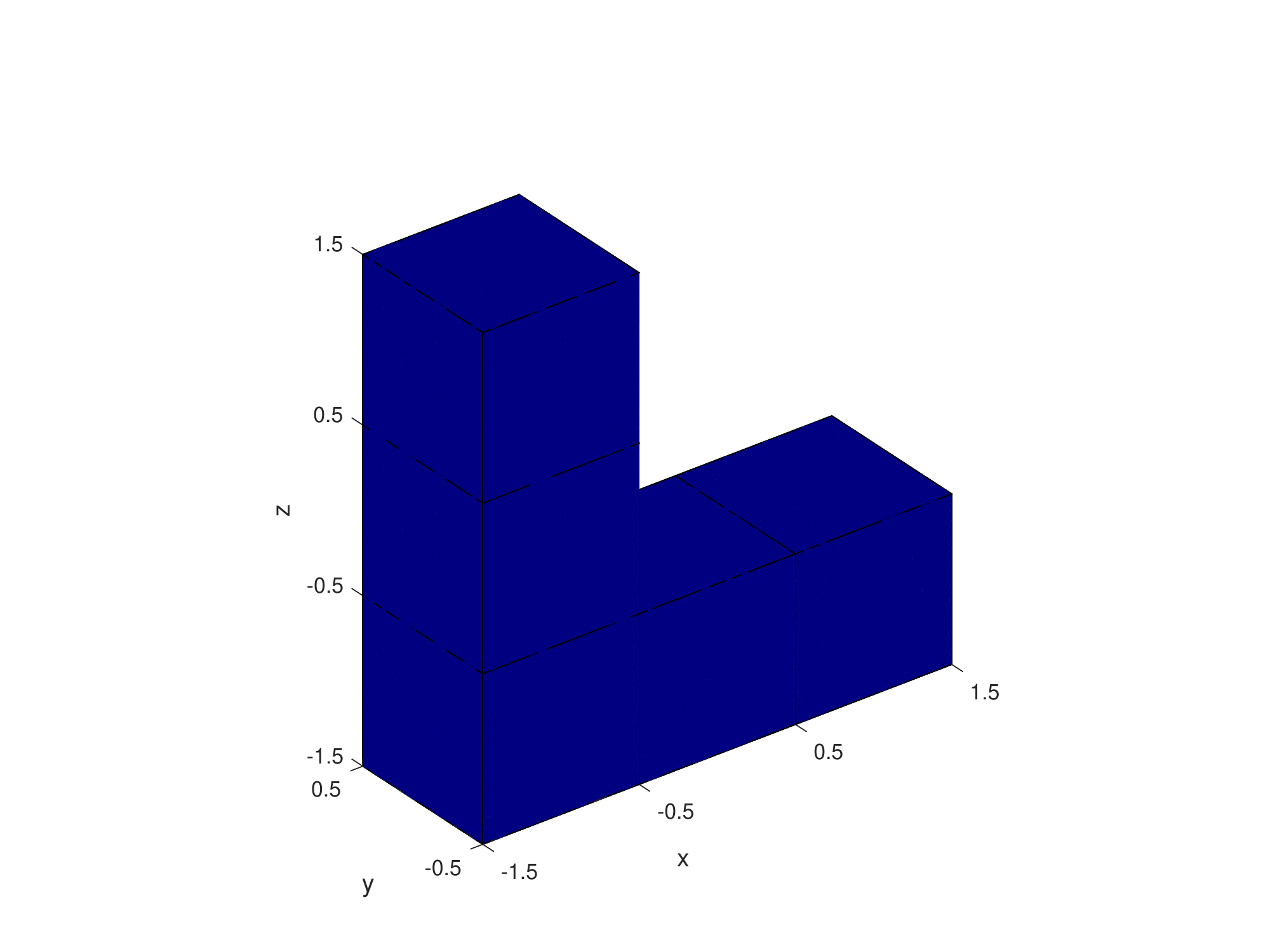}\hfill{}\includegraphics[width=0.48\textwidth]{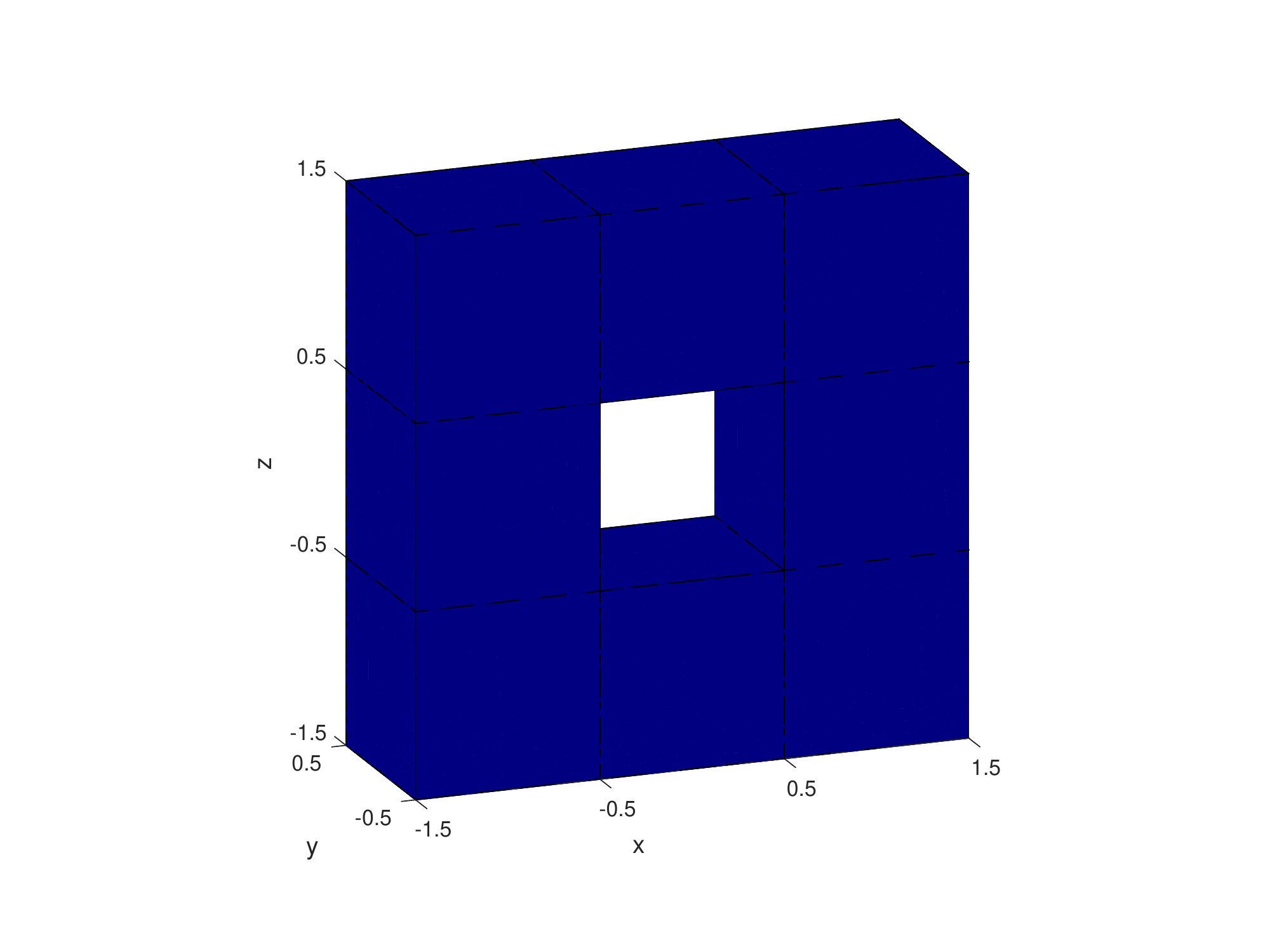}\hfill{}

\hfill{}(c)\hfill{}\hfill{}
(d)\hfill{}

\hfill{}\includegraphics[width=0.48\textwidth]{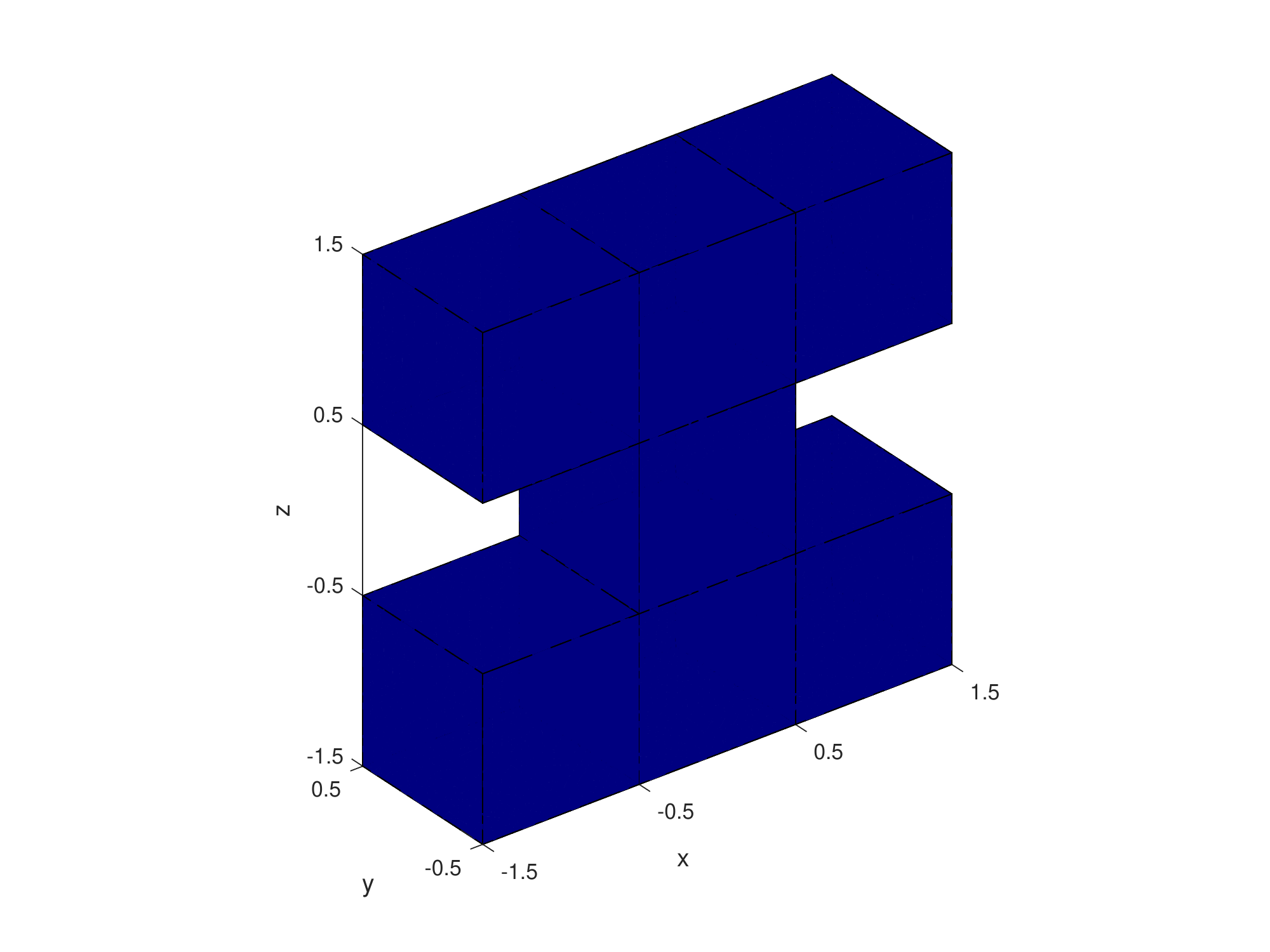}\hfill{}\includegraphics[width=0.48\textwidth]{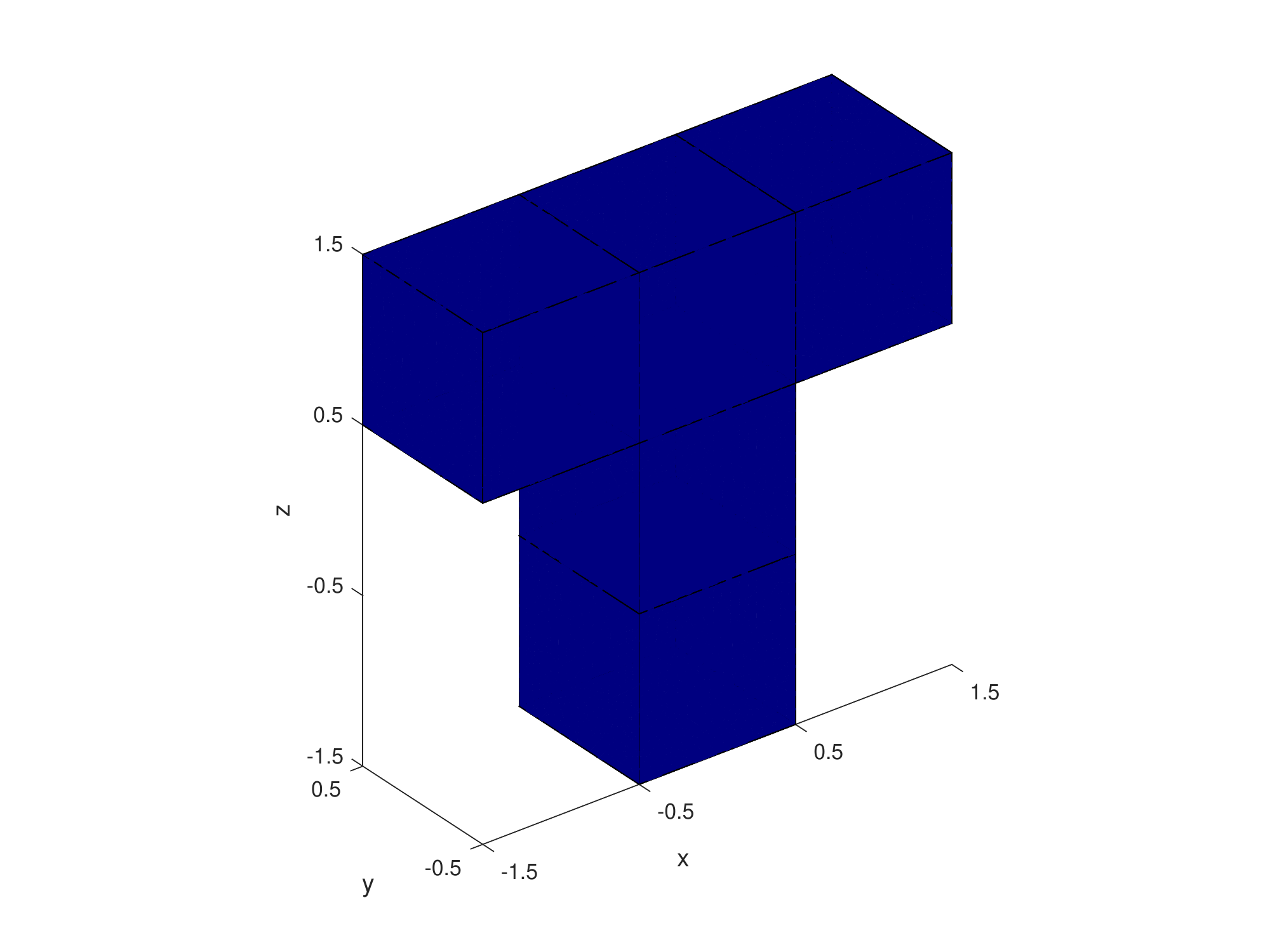}\hfill{}

\hfill{}(e)\hfill{}\hfill{}
(f)\hfill{}

\caption{\label{fig:Dictionary}Dictionary}

\end{figure}

The experimental setup is as follows.
As shown in Figure~\ref{fig:Dictionary}, we consider six gesture domains, $D_i$, $i=1,\ldots,6$, which are composed of a number of unit cubes, more exactly ranging from four to eight.
The measurement surface $\Gamma$ is set to be a unit square in the $x^2 x^3$-plane and centered at the origin. The respective scattered far fields on the unit sphere and near fields on the measurement surface $\Gamma$ of gesture domains
in the dictionary  $\mathfrak{D}$ as in \eqref{eq:dic:class} are first collected in advance for incident plane waves with different directions.

In all the examples, the electric far-field pattern $E_k^\infty(D_i, z; \hat{x})$, or abbreviated by $E_k^\infty(\hat{x})$ in this part,  is observed at 590
Lebedev quadrature points distributed on the unit sphere
$\mathbb{S}^{2}$ (cf.~\cite{Leb99} and references therein).  The exact far-field data
$E_k^\infty(\hat{x})$ are corrupted point-wise by the formula
\begin{equation}
E_{k,\delta}^\infty(\hat{x})  = E_k^\infty(\hat{x}) +\delta\zeta_1\underset{\theta}{\max}|E_k^\infty(\hat{x})|\exp(i2\pi
\zeta_2)\,,
\end{equation}
where $\delta$ refers to the relative noise level, and both  $\zeta_1$ and $\zeta_2$ follow the
uniform distribution ranging from $-1$ to $1$.   The scattered
electromagnetic fields are synthesized using the quadratic edge
element discretization in a spherical domain centered at the origin enclosed by a
spherical PML layer to damp the reflection. Local adaptive
refinement techniques within the inhomogeneous scatterer are adopted to
enhance the resolution of the scattered field. The far-field data
are approximated by the integral equation representation \cite[p.~181,
Theorem~3.1]{PiS02}  using the numerical quadrature. We refine the mesh successively
till the relative maximum error of successive groups of far-field
data is below $0.1\%$. The far-field patterns on the finest mesh
are used as the exact data. The near field data are obtained in a similar manner
except on the measurement surface $\Gamma$.

The target gesture is given by $\Omega:=D_{i}+z_{0}$, where $z_{0}$ is fixed to be $(150,\,0,\,0)$.   The wavelength of the low-frequency detecting wave for locating the position is set
to be $\lambda_{1}:=20$ and the wavelength of the high-frequency detecting wave for the shape identification
is set to be $\lambda_{2}:=2$.  In practical implementations, for the location determination, the far fields are approximated by the scattered fields that are measured more than ten wavelength away and
the indicator function are computed on the measurement surface $\Gamma$ in \eqref{eq:indicator:gamma} instead of the full aperture  $\mathbb{S}^2$ in \eqref{eq:indicator}. And for the shape determination, we use near fields data as
in \eqref{eq:indicator:J:gamma} from Theorem~\ref{thm:shape:deter:bdry}.

\subsection{PEC Gestures}

In the first example, we test with the six gesture domains with the PEC boundary condition.
The positions are first found in the location determination stage by locating the maximum value of the indicator function \eqref{eq:indicator:gamma}.
In the noise-free case,    the coordinates and
the distance from the exact location are shown in Table~\ref{tab:location-test-pec} for each gesture in the dictionary.
Compared with the exact position $z_0$, the difference between the exact and estimated positions are always below $0.1\%$ in terms of the Euclidean distance.

Next, we compute indicator function value in \eqref{eq:indicator:J:gamma} using the near field data measured on $\Gamma$ and the approximate position found in Table~\ref{tab:location-test-pec}.
The result of gesture recognition is shown in Table~\ref{tab:gesture-test-pec}. The values of the indicator function value have  been
rescaled between $0$ and $1$ by normalizing with respect to the maximum function value among all six reference gestures in each row of the table to highlight the unique gesture
identified. The same normalization procedure is employed in the sequel. We see from Table~\ref{tab:gesture-test-pec} that the peak value are always taken in the diagonal line
when the measurement data match with the precomputed data of the correct gesture.

\begin{table}
\hfill{}%
\begin{tabular}{|c|c|c|c|c|c|c|}
\hline
 & $D_{1}$ & $D_{2}$ & $D_{3}$ & $D_{4}$ & $D_{5}$ & $D_{6}$\tabularnewline
\hline
\hline
$\mathring{z}_{0}^{1}$ & $149.9838$ & $149.9611$ & $149.9742$ & $149.9632$ & $150.0075$ & $149.9853$\tabularnewline
\hline
$\mathring{z}_{0}^{2}$ & $0.0400$ & $0.0280$ & $-0.0096$ & $0.0442$ & $-0.0440$ & $0.0321$\tabularnewline
\hline
$\mathring{z}_{0}^{3}$ & $-0.0131$ & $-0.0110$ & $-0.0404$ & $-0.0456$ & $-0.0265$ & $-0.0485$\tabularnewline
\hline
$\left|\mathring{z}_{0}-z_{0}\right|$ & $0.0451$ & $0.0492$ & $0.0489$ & $0.0734$ & $0.0519$ & $0.0600$\tabularnewline
\hline
\end{tabular}\hfill{}

\caption{\label{tab:location-test-pec}PEC location test.}
\end{table}
\begin{table}
\hfill{}%
\begin{tabular}{|c|c|c|c|c|c|c|}
\hline
 & $D_{1}$ & $D_{2}$ & $D_{3}$ & $D_{4}$ & $D_{5}$ & $D_{6}$\tabularnewline
\hline
\hline
$D_{1}$ & $\boldsymbol{1.0000}$ & $0.9234$ & $0.9291$ & $0.8660$ & $0.8249$ & $0.9453$\tabularnewline
\hline
$D_{2}$ & $0.9233$ & $\boldsymbol{1.0000}$ & $0.9245$ & $0.9502$ & $0.9109$ & $0.9146$\tabularnewline
\hline
$D_{3}$ & $0.9290$ & $0.9242$ & $\boldsymbol{1.0000}$ & $0.9040$ & $0.9494$ & $0.9851$\tabularnewline
\hline
$D_{4}$ & $0.8660$ & $0.9510$ & $0.9040$ & $\boldsymbol{1.0000}$ & $0.9301$ & $0.9742$\tabularnewline
\hline
$D_{5}$ & $0.8249$ & $0.9107$ & $0.9492$ & $0.9300$ & $\boldsymbol{1.0000}$ & $0.9159$\tabularnewline
\hline
$D_{6}$ & $0.9451$ & $0.9147$ & $0.9849$ & $0.9732$ & $0.9149$ & $\boldsymbol{1.0000}$\tabularnewline
\hline
\end{tabular}\hfill{}

\caption{\label{tab:gesture-test-pec}PEC gesture test.}
\end{table}

In the noisy case with noise level of $5\%$,  the positions found in the first stage are shown in Table \ref{tab:location-test-pec-noise}.
the difference between the exact and estimated positions is still very small.
The result of gesture recognition is shown in Table \ref{tab:gesture-test-pec-noise}, which clearly shows that  all the correct pairs matches the best.
The test with noisy data shows the robustness with respect to
noisy measurement data  of both the locating and recognition indicator functions in \eqref{eq:indicator:gamma} and \eqref{eq:indicator:J:gamma}. This salient robustness
is due to the inner product operation, which eliminates implicitly the noisy part in light of the orthogonality.

\begin{table}
\hfill{}%
\begin{tabular}{|c|c|c|c|c|c|c|}
\hline
 & $D_{1}$ & $D_{2}$ & $D_{3}$ & $D_{4}$ & $D_{5}$ & $D_{6}$\tabularnewline
\hline
\hline
$\mathring{z}_{0}^{1}$ & $150.0278$ & $150.0547$ & $150.0965$ & $150.0158$ & $150.0971$ & $150.0958$\tabularnewline
\hline
$\mathring{z}_{0}^{2}$ & $0.0679$ & $0.0743$ & $0.0655$ & $0.0706$ & $0.0277$ & $0.0097$\tabularnewline
\hline
$\mathring{z}_{0}^{3}$ & $0.0758$ & $0.0392$ & $0.0171$ & $0.0032$ & $0.0046$ & $0.0823$\tabularnewline
\hline
$\left|\mathring{z}_{0}-z_{0}\right|$ & $0.1055$ & $0.1003$ & $0.1173$ & $0.1196$ & $0.0322$ & $0.1277$\tabularnewline
\hline
\end{tabular}\hfill{}

\caption{\label{tab:location-test-pec-noise}PEC location test with $5\%$ noise.}
\end{table}
\begin{table}
\hfill{}%
\begin{tabular}{|c|c|c|c|c|c|c|}
\hline
 & $D_{1}$ & $D_{2}$ & $D_{3}$ & $D_{4}$ & $D_{5}$ & $D_{6}$\tabularnewline
\hline
\hline
$D_{1}$ & $\boldsymbol{1.0000}$ & $0.9453$ & $0.9632$ & $0.8213$ & $0.9182$ & $0.9649$\tabularnewline
\hline
$D_{2}$ & $0.9431$ & $\boldsymbol{1.0000}$ & $0.9374$ & $0.8864$ & $0.9205$ & $0.9061$\tabularnewline
\hline
$D_{3}$ & $0.9651$ & $0.9255$ & $\boldsymbol{1.0000}$ & $0.9213$ & $0.9070$ & $0.9124$\tabularnewline
\hline
$D_{4}$ & $0.8268$ & $0.8811$ & $0.9219$ & $\boldsymbol{1.0000}$ & $0.9621$ & $0.9450$\tabularnewline
\hline
$D_{5}$ & $0.9152$ & $0.9213$ & $0.9071$ & $0.9491$ & $\boldsymbol{1.0000}$ & $0.9378$\tabularnewline
\hline
$D_{6}$ & $0.9649$ & $0.9066$ & $0.9123$ & $0.9459$ & $0.9367$ & $\boldsymbol{1.0000}$\tabularnewline
\hline
\end{tabular}\hfill{}

\caption{\label{tab:gesture-test-pec-noise}PEC gesture test with $5\%$ noise.}
\end{table}

\subsection{Medium Case}

In the second example, we test with an inhomogeneous medium among the six reference gesture domains.
\[
n_{k,\,\Omega}=\begin{cases}
1 & x\in\mathbb{R}^{3}\backslash\bar{\Omega}\\
5 & x\in\Omega
\end{cases}.
\]

The results of location and gesture tests are shown, respectively, in Tables~\ref{tab:location-test-medium} and \ref{tab:gesture-test-medium} for the noise-free case,
and in Tables~\ref{tab:location-test-medium-noise} and \ref{tab:gesture-test-medium-noise} for the noisy case with $5\%$ noise level.
Both noise-free and noisy cases tell us that our locating and gesture recognition algorithms are very robust with noise and work very well even with data of limited aperture.
And the computational efforts is quite less and the recognition schemes are very efficient only involving with inner product by known data at hand.

\begin{table}
\hfill{}%
\begin{tabular}{|c|c|c|c|c|c|c|}
\hline
 & $D_{1}$ & $D_{2}$ & $D_{3}$ & $D_{4}$ & $D_{5}$ & $D_{6}$\tabularnewline
\hline
\hline
$\mathring{z}_{0}^{1}$ & $150.0417$ & $150.0254$ & $149.9576$ & $150.0279$ & $150.0069$ & $149.9837$\tabularnewline
\hline
$\mathring{z}_{0}^{2}$ & $-0.0214$ & $-0.0120$ & $-0.0446$ & $0.0434$ & $-0.0031$ & $-0.0338$\tabularnewline
\hline
$\mathring{z}_{0}^{3}$ & $0.0257$ & $0.0068$ & $0.0031$ & $-0.0370$ & $-0.0488$ & $0.0294$\tabularnewline
\hline
$\left|\mathring{z}_{0}-z_{0}\right|$ & $0.0535$ & $0.0289$ & $0.0616$ & $0.0635$ & $0.0494$ & $0.0477$\tabularnewline
\hline
\end{tabular}\hfill{}

\caption{\label{tab:location-test-medium}Medium location test.}
\end{table}
\begin{table}
\hfill{}%
\begin{tabular}{|c|c|c|c|c|c|c|}
\hline
 & $D_{1}$ & $D_{2}$ & $D_{3}$ & $D_{4}$ & $D_{5}$ & $D_{6}$\tabularnewline
\hline
\hline
$D_{1}$ & $\boldsymbol{1.0000}$ & $0.9311$ & $0.8848$ & $0.9393$ & $0.8716$ & $0.9418$\tabularnewline
\hline
$D_{2}$ & $0.9312$ & $\boldsymbol{1.0000}$ & $0.9174$ & $0.8838$ & $0.9100$ & $0.9086$\tabularnewline
\hline
$D_{3}$ & $0.8834$ & $0.9179$ & $\boldsymbol{1.0000}$ & $0.9295$ & $0.9740$ & $0.8854$\tabularnewline
\hline
$D_{4}$ & $0.9398$ & $0.8899$ & $0.9004$ & $\boldsymbol{1.0000}$ & $0.9200$ & $0.9864$\tabularnewline
\hline
$D_{5}$ & $0.8737$ & $0.9171$ & $0.9422$ & $0.9183$ & $\boldsymbol{1.0000}$ & $0.9131$\tabularnewline
\hline
$D_{6}$ & $0.9346$ & $0.9087$ & $0.8557$ & $0.9131$ & $0.9089$ & $\boldsymbol{1.0000}$\tabularnewline
\hline
\end{tabular}\hfill{}

\caption{\label{tab:gesture-test-medium}Medium gesture test.}
\end{table}

\begin{table}
\hfill{}%
\begin{tabular}{|c|c|c|c|c|c|c|}
\hline
 & $D_{1}$ & $D_{2}$ & $D_{3}$ & $D_{4}$ & $D_{5}$ & $D_{6}$\tabularnewline
\hline
\hline
$\mathring{z}_{0}^{1}$ & $149.9758$ & $149.9762$ & $149.9722$ & $149.9819$ & $149.9586$ & $149.9529$\tabularnewline
\hline
$\mathring{z}_{0}^{2}$ & $-0.0091$ & $0.0103$ & $-0.0383$ & $-0.0076$ & $-0.0238$ & $0.0429$\tabularnewline
\hline
$\mathring{z}_{0}^{3}$ & $0.0095$ & $0.0211$ & $-0.0203$ & $0.0008$ & $0.0301$ & $0.0230$\tabularnewline
\hline
$\left|\mathring{z}_{0}-z_{0}\right|$ & $0.0275$ & $0.0334$ & $0.0515$ & $0.0197$ & $0.0565$ & $0.0677$\tabularnewline
\hline
\end{tabular}\hfill{}

\caption{\label{tab:location-test-medium-noise}Medium location test with $5\%$ noise.}
\end{table}
\begin{table}
\hfill{}%
\begin{tabular}{|c|c|c|c|c|c|c|}
\hline
 & $D_{1}$ & $D_{2}$ & $D_{3}$ & $D_{4}$ & $D_{5}$ & $D_{6}$\tabularnewline
\hline
\hline
$D_{1}$ & $\boldsymbol{1.0000}$ & $0.8800$ & $0.9109$ & $0.9321$ & $0.8895$ & $0.9280$\tabularnewline
\hline
$D_{2}$ & $0.8738$ & $\boldsymbol{1.0000}$ & $0.8666$ & $0.9520$ & $0.9303$ & $0.9032$\tabularnewline
\hline
$D_{3}$ & $0.9105$ & $0.8656$ & $\boldsymbol{1.0000}$ & $0.9095$ & $0.8817$ & $0.9021$\tabularnewline
\hline
$D_{4}$ & $0.9320$ & $0.9221$ & $0.9802$ & $\boldsymbol{1.0000}$ & $0.9160$ & $0.9287$\tabularnewline
\hline
$D_{5}$ & $0.8863$ & $0.9285$ & $0.8819$ & $0.9162$ & $\boldsymbol{1.0000}$ & $0.9169$\tabularnewline
\hline
$D_{6}$ & $0.9279$ & $0.9030$ & $0.9256$ & $0.9141$ & $0.9171$ & $\boldsymbol{1.0000}$\tabularnewline
\hline
\end{tabular}\hfill{}

\caption{\label{tab:gesture-test-medium-noise}Medium gesture test with $5\%$ noise.}
\end{table}

\section{Conclusion}\label{sect:conclusion}

We proposed and analyzed the gesture recognition with electromagnetic detection. The mathematical principle is deeply involved with inverse scattering theory of electromagnetic waves. We employed the translation relations between the scattering by electromagnetic incident point source and the scattering by incident plane waves. A two-stage recognition algorithm is designed based on the theoretical analysis. In the first stage low frequency scattering data are employed for locating positions, and in the second stage the regular frequency scattering data are employed for determining the shapes. The data needed for the recognition algorithm are stored in a precomputed dictionary, and computations of the proposed algorithm are mainly inner products of the corresponding data, which are very robust to noise. Various numerical tests also show the efficiency of the proposed algorithm. There are still some interesting problems to solve, i.e., determining the information of the rotations of the scatterers, since these information is already included in the the scattered data dictionary generally.

\section*{Acknowledgement}
{\small
The work of J. Li was supported by the NSF of China under the grant No.\, 11571161, the Shenzhen Sci-Tech Fund No. JCYJ20160530184212170 and the SUSTech startup fund.
The work of H. Liu was supported by the FRG grants and startup fund from Hong Kong Baptist University, and Hong Kong RGC General Research Funds (12302415 and 12302017).
  Hongpeng Sun acknowledges the support of
Fundamental Research Funds for the Central Universities, and the
research funds of Renmin University of China (15XNLF20). He also acknowledges discussion with Dr. Yuliang Wang during working on the topic.
}

\end{document}